\newcommand{\R}{\mathbb{R}}
\newcommand{\N}{\mathbb{N}}
\newcommand{\inner}[2]{\ifthenelse{\equal{#2}{}}{\left\langle\cdot,\cdot\right\rangle_{#1}}{\left\langle#2\right\rangle_{#1}}}
\newcommand{\norm}[2]{\ifthenelse{\equal{#2}{}}{\left\|\cdot\right\|_{#1}}{\left\|#2\right\|_{#1}}}
\newcommand{\seminorm}[2]{\ifthenelse{\equal{#2}{}}{\left|\cdot\right|_{#1}}{\left|#2\right|_{#1}}}
\newcommand{\fa}{\hbox{ for all }}
\newtheorem{theorem}{Theorem}
\newtheorem{proposition}[theorem]{Proposition}
\newtheorem{corollary}[theorem]{Corollary}
\newtheorem{lemma}[theorem]{Lemma}
\newtheorem{definition}[theorem]{Definition} 
\newtheorem{remark}[theorem]{Remark}
\DeclareMathOperator{\cssim}{cSSIM}
\DeclareMathOperator{\sinc}{sinc}
\newcommand{\bs}{\boldsymbol}
\DeclareMathOperator{\wssim}{W-SSIM}
\DeclareMathOperator{\wcssim}{W-cSSIM}
\title{Convergence results in image interpolation with the continuous SSIM}
\author[1]{Francesco Marchetti\thanks{\url{francesco.marchetti@unipd.it}}}
\author[2]{Gabriele Santin\thanks{\url{gsantin@fbk.eu}}}
\affil[1]{Department of Mathematics \lq\lq Tullio Levi-Civita", University of Padova, Italy}
\affil[2]{Digital Society Center, Bruno Kessler Foundation (FBK), Trento, Italy}
\begin{document}

\maketitle

\begin{abstract}
Assessing the similarity of two images is a complex task that attracts significant efforts in the image processing community.
The widely used Structural Similarity Index Measure (SSIM) addresses this problem by quantifying a perceptual structural similarity.
In this paper we consider a recently introduced continuous SSIM (cSSIM), which allows one to analyze sequences of images of increasingly fine resolutions, and further extend the definition of the index to encompass the locally weighted version that is used in practice. 
For both the local and the global versions, we prove that the continuous index includes the classical SSIM as a special case, and we provide a precise connection between image similarity measured by the cSSIM and by the $L_2$ norm.
Using this connection, we derive bounds on the cSSIM by means of bounds on the $L_2$ error, and we even prove that the two error measures are equivalent in certain circumstances. We exploit these results to obtain precise rates of convergence with respect to the cSSIM for several concrete image interpolation methods, and we further validate these findings by different numerical experiments.
This newly established connection paves the way to obtain novel insights into the features and limitations of the SSIM, including on the effect of the local weighted window on the index performances. 
\end{abstract}

\noindent\textbf{Keywords:} Structural similarity (SSIM), image quality assessment, image interpolation, structural information, convergence rates

\noindent\textbf{AMS subject classifications:}    41A05, 65D05, 65D12, 68U10

\section{Introduction}

Image interpolation is a widely investigated topic in image processing and concerns various applications, such as super-resolution, image resizing, image rotation and registration. 

Many different interpolation techniques have been developed in the last decades, and are used in different settings. Nearest-neighbor, bilinear and bicubic interpolation are classical fast polynomial-based approaches \cite{Jiang15,Rukundo14,Smith81}, which however present limitations especially in the presence of edges \cite{Siu12}. In order to treat the resulting artifacts, adaptive techniques have been proposed \cite{AlNasrawi17,Hwang04,Khan19}.
In certain applications, an image needs instead to be reconstructed by interpolating at non gridded data sites. In this case, unless it is possible to exploit some particular properties concerning the distribution of the samples \cite{DeMarchi17}, a kernel-based approach guarantees the flexibility required when dealing with scattered data \cite{DeMarchi20}.
Furthermore, various deep learning strategies based on Convolutional Neural Networks (CNNs) have been designed in the last years and turned out to be accurate and efficient tools in many tasks \cite{Chen18,Huang17,Yamanaka17,Zeng19}.

Independently of the specific method of choice, evaluating the adherence of the final reconstruction to the original image is itself a challenging task, and classical error metrics have been found in many cases to be unsuitable for this goal.
The Structural Similarity Index Measure (SSIM) \cite{Wang04} is a extensively used metric 
that aims at quantifying the \emph{visually perceived} quality of the reconstruction. In the last years its mathematical properties have been thoroughly investigated, and different result have been derived \cite{Brunet12,Brunet12a}. Moreover, modifications of the SSIM has been proposed, e.g. in \cite{Sampat09,Rehman15}. In particular, the continuous SSIM (cSSIM) has been introduced in \cite{Marchetti2021a} as the extension of the \textit{discrete} SSIM to a continuous framework. There, the convergence rate of a kernel-based interpolant in terms of the cSSIM has been analysed in terms of the supremum norm.

In this work, we consider again the cSSIM and start by extending it to a local, weighted index, namely the W-cSSIM, inspired from the classical discrete counterpart. Then, we provide a clean theoretical link between these continuous indices and their discrete versions, thus improving what outlined in \cite{Marchetti2021a}, showing that the cSSIM (W-cSSIM) is indeed a limit version of the SSIM (W-SSIM) as the resolution of the images gets larger. Then, in Section \ref{sec:cssim_l2} we analyse both the cSSIM and the W-cSSIM and prove that the dissimilarity index $1-\mathrm{cSSIM}$ can be bounded by the square of the $L_2$ norm. We point out that our analysis differs significantly from the one carried out in \cite{Brunet12a}, where a normalized SSIM-derived $L_2$ metric is compared to the SSIM through a statistical approach. While sharing a similar spirit, by virtue of the cSSIM here the interlacing between SSIM and $L_2$ norm is interpreted and formalized from a different perspective.

In Section \ref{sec:equivalence} we prove that, under certain assumptions, even lower bounds on the dissimilarity index in terms of the $L_2$ norm can be derived, and we thus show that, in this case, the dissimilarity index $1-\mathrm{cSSIM}$ is in fact \textit{equivalent} to the $L_2$ norm. A discussion of these assumptions sheds some light on the opposite situation, when indeed the SSIM is able to detect similarities that are not measured by the $L_2$ error, as it is often observed in practice (see e.g. \cite{Wang2009}).

These theoretical findings are exploited in Section \ref{sec:int_methods} to formulate accurate convergence rate estimates for bilinear, Hermite bicubic and kernel-based interpolation, which are confirmed by the numerical tests carried out in Section \ref{sec:num_tests}. Then, in Section \ref{sec:img_interp} we perform some concrete image interpolation experiments, showing how the limitations in regularity affect the theoretically achieved convergence rates. The discussion of the obtained results, as well as some concluding remarks, are offered in Section \ref{sec:conclusions}.

\section{The Structural Similarity Index and its continuous counterpart}

We start by recalling the definitions of SSIM and cSSIM. Letting $F,G\in\R_{\ge 0}^{p\times q},\;p,q\in\mathbb{N}$, be positive-valued matrices representing 
two single-channel images, the SSIM, as introduced in the original paper \cite{Wang04}, is defined as
\begin{equation}\label{eq:ssim}
    \textrm{SSIM}(F,G) \coloneqq \frac{2\mu_F\mu_G+c_1}{\mu^2_F+\mu^2_G+c_1}\cdot\frac{2\sigma_{F,G}+c_2}{\sigma^2_F+\sigma^2_G+c_2},
\end{equation}
where $\mu_F,\mu_G$ are the sample mean of $F$ and $G$, whilst $\sigma^2_F,\sigma^2_G$ and $\sigma_{F,G}$ are the sample variances and covariance. The 
constants $c_1, c_2>0$ are stabilizing factors that avoid division by zero, and can be tuned by the user.
The first term in the product \eqref{eq:ssim} is regarded as a luminance similarity between $F$ and $G$, and in fact (see e.g. \cite{Venkataramanan2021}) a more general version of the local SSIM can be defined, where the second factor of the product in \eqref{eq:ssim} is further split into two terms representing the contrast and structural similarity of the images. Since this approach is uncommon as stated in \cite{Venkataramanan2021}, we adopt the same assumption here and use the definition of Equation \eqref{eq:ssim}.

Differently with respect to the classic SSIM, the definition of the cSSIM is natively not restricted to matrices. We consider a bounded set 
$\Omega\subset\R^d$ and a probability measure $\nu$ on $\Omega$, and denote as $L_2^+(\Omega, \nu)$ the set of functions $f\in L_2(\Omega,\nu)$ with $f\geq 0$ $\nu$-almost everywhere ($\nu$-a.e. in the following). For $f, g\in L_2^+(\Omega,\nu)$ we define
\begin{align}\label{eq:mu_and_sigma}
\mu_f&\coloneqq \int_\Omega f d\nu = \norm{L_1}{f},\\
\sigma_{fg}&\coloneqq \int_\Omega (f-\mu_f) (g-\mu_g) d\nu = \inner{L_2}{f-\mu_f, g-\mu_g}\nonumber.
\end{align}
Then the cSSIM between $f$ and $g$ is defined in \cite{Marchetti2021a} as
\begin{align}\label{eq:cssim}
\cssim_{\nu}(f, g)\coloneqq \frac{2\mu_f\mu_g + c_1}{\mu_f^2 + \mu_g^2 + c_1}\cdot \frac{2\sigma_{fg} + c_2}{\sigma_{ff} + \sigma_{gg} + c_2},
\end{align}
with the constants $c_1, c_2>0$ playing the same role as in the discrete case. 

In the following, unless otherwise stated, we assume w.l.o.g. that $\nu$ is the normalized Lebesgue measure of $\Omega$, and we simply write $\cssim$, $L_2(\Omega)$, $L_2^+(\Omega)$.
Furthermore, observe that the cSSIM is defined in general dimension $d$, and it is thus not restricted to images. A similar extension would be possible in the discrete case by considering $d$-channel matrices, i.e., $F, G\in \R_{\geq 0} ^{p\times 
q\times d}$, and extending \eqref{eq:ssim} in the obvious way.

\subsection{A local definition}

In actual imaging problems, however, the SSIM has found wide application especially because it can be formulated in a local way in order to capture the fine-grained structure of an image, as opposite to more traditional metrics. 

For a detailed treatment of these aspects we refer to the recent paper \cite{Venkataramanan2021}, and we recall here only the basic facts needed for our analysis.
Following the definitions of the cited paper, given a window size $k\in\N$ we consider a $k\times k$ positive filter (or weight matrix) $W\in\R^{k\times k}_{\geq 0}$ with $\sum_{\ell,m=1}^k W(\ell, m)= 1$. For each pixel $(i, j)$ of an image $F\in\R_{\geq 0}^{p\times q}$, we further denote as $W_{ij}\in\R^{p\times q}$ the window obtained by shifting $W$ to the pixel $(i, j)$, i.e., 
\begin{equation*}
    W_{ij}(\ell, m):= 
    \begin{cases}
    W(\ell-i+1, m-j+1) &\text{ if } i\leq \ell\leq i+k-1, j\leq m \leq j+k -1, \\
    0 &\text{ otherwise}. 
    \end{cases}
\end{equation*} 
Observe that the convention here is to anchor the window on the corner pixel $(i,j)$ and to use a square support, but the extension to other centering and to non-square filters is straightforward, and we omit it here for simplicity.

Using this window one may define the local and weighted sample mean and variance of images $F,G\in\R_{\geq 0}^{p\times q}$. Namely, for $1\leq i\leq p$, $1\leq j\leq q$ we set
\begin{align}\label{eq:discrete_mu_sigma_weighted}
    \mu_F(i, j)&:= \sum_{\ell=1}^p\sum_{m=1}^q W_{ij}(\ell, m) F(\ell, m),\\
    \sigma_{FG}(i, j)&:= \sum_{\ell=1}^p\sum_{m=1}^q W_{ij}(\ell, m) \left(F(\ell, m)  - \mu_F(i, j)\right)\left(G(\ell, m)  - \mu_G(i, j)\right),\nonumber
\end{align}
and $\sigma_{F}^2(i, j):=\sigma_{FF}(i,j)$. Observe that in practice only the terms corresponding to the non-zero values of $W_{ij}$ are evaluated, so each sum is in fact over at most $k^2$ terms.

Given these local quantities, a local SSIM between $F, G\in \R_{\geq 0}^{p\times q}$ can be simply defined following \eqref{eq:ssim} as
\begin{equation}\label{eq:local_ssim}
Q(i, j) \coloneqq 
\frac{2\mu_F(i, j)\mu_G(i, j)+c_1}{\mu^2_F(i, j)+\mu^2_G(i, j)+c_1}
\cdot\frac{2\sigma_{FG}(i, j)+c_2}{\sigma^2_F(i, j)+\sigma^2_G(i, j)+c_2}, \;\; 1\leq i\leq p, 1\leq j\leq q.
\end{equation}
Finally, the weighted (or mean) SSIM is given by
\begin{equation}\label{eq:weighted_ssim}
\textrm{W-SSIM}(F,G):= \frac1{p q} \sum_{i=1}^p\sum_{j=1}^q Q(i, j).
\end{equation}
We remark that this is often employed as the standard definition of the SSIM, but we prefer to keep the name $\wssim$ in this paper to distinguish it from the globally defined version of Equation \eqref{eq:ssim}.

To analyze also this weighted index in the continuous case, we give the following definition that extends the one of \cite{Marchetti2021a} by using a convolution with a suitable weight function.
\begin{definition}\label{def:wcssim}
Let $\Omega\subset\R^d$ be bounded, $\nu$ be a probability measure on $\Omega$, and consider a continuous and positive weight function $w:\R^d\to \R$ such that $w(\cdot-x)\in L_2(\Omega,\nu)$ and $\int_\Omega w(y-x) d\nu(y) = 1$ for each $x\in \Omega$, and $w_{\max}\coloneqq\max_{x\in \Omega}{w(x)}<\infty$. 

For $f, g\in L_2^+(\Omega,\nu)$ and $x\in\Omega$, let
\begin{align}\label{eq:mu_and_sigma_weighted}
\mu_f(x)&\coloneqq \int_\Omega f(y) w(y- x) d\nu(y),\\
\nonumber\sigma_{fg}(x)&\coloneqq \int_\Omega (f(y)-\mu_f(x)) (g(y)-\mu_g(x)) w(y - x) d\nu(y).
\end{align}
Then the local cSSIM between $f$ and $g$ is defined for $c_1, c_2>0$ as
\begin{equation}\label{eq:local_cssim}
    q(x) \coloneqq \frac{2\mu_f(x)\mu_g(x) + c_1}{\mu_f(x)^2 + \mu_g(x)^2 + c_1}\cdot \frac{2\sigma_{fg}(x) + c_2}{\sigma_{ff}(x) + \sigma_{gg}(x) + c_2}, \;\; x\in\Omega,
\end{equation}
and the weighted cSSIM between $f$ and $g$ is defined as
\begin{align}\label{eq:weighted_cssim}
\wcssim_{\nu}(f, g)\coloneqq \int_\Omega q(x) d\nu(x).
\end{align}
\end{definition}

The interest is in the case when $w$ is compactly supported e.g. in a ball $B(0, r)$ for some (small) value $r>0$, so that $w(\cdot-x)$ is a local filter supported in $B(x, r)$. Moreover, also in this case, as in the discrete setting, it would be possible to consider a general weight function $w:\Omega\times \Omega\to\R$ and replace $w(y-x)$ with $w(x, y)$ in Equation \eqref{eq:mu_and_sigma_weighted}. This generalization does not change in any significant way the results of this paper, and so we prefer to stick to the simpler formulation given in Definition \ref{def:wcssim}.

\subsection{Relations between the different indices}\label{sec:relation}

The $\wcssim$ is clearly a generalization of the $\cssim$, since the particular choice $w\equiv 1$  gives $\mu_f(x) = \mu_f$ and $\sigma_{fg}(x) = \sigma_{fg}$ for each $x\in\Omega$ (Equation \eqref{eq:mu_and_sigma_weighted}). It follows that in this case $q(x)=\cssim(f, g)$ for all $x\in\Omega$ (Equation \eqref{eq:local_cssim}), and since $\nu$ is a probability measure we also have $\wcssim(f, g) = \cssim(f, g)$ from Equation \eqref{eq:weighted_cssim}.

On the other hand, observe that under the assumptions on $w$ (see Definition \ref{def:wcssim}), for each $x\in\Omega$ the measure $\nu_{w, x}$ defined for a measurable $f$ as 
\begin{equation*}
\int_\Omega f(y) d\nu_{w,x}(y)   :=\int_\Omega f(y) w(y -x) d\nu(y),
\end{equation*}
is a probability measure on $\Omega$. Moreover, observe that $f\in L_2^+(\Omega, \nu)$ implies that $f\in L_2^+(\Omega, \nu_{w,x})$ since
\begin{equation*}
\norm{L_2(\Omega, \nu_{w,x})}{f}^2=
    \int_\Omega |f(x)|^2 w(y-x) d\nu(x) \leq \max\limits_{x\in \Omega}{w(x)}\int_\Omega |f(x)|^2 d\nu(x)
=w_{\max}\norm{L_2(\Omega, \nu_{w,x})}{f}^2.
\end{equation*}
We will thus just assume that $f, g\in L_2^+(\Omega)\coloneqq L_2^+(\Omega,\nu)$ in the following. 
With this observation, it is immediate to see that $\mu_f(x)$ as defined in \eqref{eq:mu_and_sigma_weighted} is indeed $\mu_f$ of Equation \eqref{eq:mu_and_sigma} when computed with respect to the measure $\nu_{w, x}$, and similarly for $\sigma_{fg}(x)$. It follows that for all $x\in \Omega$ and $f, g\in L_2^+(\Omega)$ it holds $q(x) = \cssim_{\nu_{w,x}}(f, g)$, and thus
\begin{equation}\label{eq:wcssim_as_mean_cssim}
\wcssim_\nu(f, g) = \int_\Omega \cssim_{\nu_{w,x}}(f, g) d\nu(x),
\end{equation}
i.e., the $\wssim$ is an average over local $\cssim$s.

Moreover, it is actually the case that the continuous indices are indeed generalizations of the discrete ones in a very specific sense that we are formulating in the next proposition. To analyze this relation we consider for simplicity the two dimensional case, even if the extension to higher dimensions is straightforward.
The following proposition proves that the classical and SSIM can be obtained from the cSSIM when $\nu$ is a discrete counting measure. On the other hand, the discrete indices are a discretization of the continuous ones: if the continuous functions $f, g$ are discretized to images of increasing resolution, then the SSIM of the discretizations converge to the cSSIM of the original functions.
The general case of the weighted and local indices easily follows thanks to Equation \eqref{eq:wcssim_as_mean_cssim}.
\begin{proposition}\label{prop:ssimtocssim}
Let $\Omega\coloneqq[a,b]\times[c,d]\subset \mathbb{R}^2$, $a<b,\;c<d$, and let $f,g\in L_2^+(\Omega)$ be bounded and continuous on $\Omega$. Let 
$m,n\in\mathbb{N}$, $m=cn$ with $c\in\mathbb{Q}$, and let $p_0,\dots,p_m$, $q_0,\dots,q_n$ be such that $p_i\coloneqq c+i(d-c)/m$, $q_i\coloneqq a+i(b-a)/n$ and $p_{i+1}-p_i 
= q_{i+1}-q_i$ for $i=0,\dots,\min\{m,n\}$.
We define the sequences of matrices $\{F_{n}\}_{n\in\mathbb{N}}$, $\{G_{n}\}_{n\in\mathbb{N}}$ as
\begin{equation*}
    F_{n}(i,j)\coloneqq f(p_i,q_j),\; G_{n}(i,j)\coloneqq g(p_i,q_j),
\end{equation*}
$1\le i\le m-1$, $1\le j\le n-1$, so that $F_n$ and $G_n$ are $(m-1)\times(n-1)$ matrices. 
Moreover, let $\nu_n$ be the normalized counting measure supported on the points $\{(p_i, q_j)\}_{1\leq i\leq m-1, 1\leq j\leq n-1}$.

Then, we have
\begin{enumerate}
    \item $\cssim_{\nu_n}(f, g) = \mathrm{SSIM}(F_n, G_n)$.
    \item $\lim_{n\to\infty}{\mathrm{SSIM}(F_n,G_n)=\mathrm{cSSIM}(f,g)}$.
\end{enumerate}
\end{proposition}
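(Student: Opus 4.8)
The plan is to treat the two claims separately: the first is a matter of unwinding definitions, while the second reduces, via the first, to a convergence-of-Riemann-sums argument combined with the continuity of the $\cssim$ in its statistical arguments.

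For claim~(1) I would note that integration against the normalized counting measure $\nu_n$ on the $N\coloneqq(m-1)(n-1)$ grid points is precisely sample averaging over those points. Hence
\[
\mu_f=\int_\Omega f\,d\nu_n=\frac1N\sum_{i,j}f(p_i,q_j)=\frac1N\sum_{i,j}F_n(i,j),
\]
which is the sample mean of $F_n$, and similarly $\mu_g$ equals the sample mean of $G_n$. The same unwinding of $\sigma_{fg}=\int_\Omega(f-\mu_f)(g-\mu_g)\,d\nu_n$ identifies it with the sample covariance of $F_n,G_n$, and likewise $\sigma_{ff},\sigma_{gg}$ with the sample variances, provided the discrete statistics in \eqref{eq:ssim} are taken with the $1/N$ normalization consistent with the probability measure $\nu_n$. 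Substituting these identities into \eqref{eq:cssim} reproduces \eqref{eq:ssim} factor by factor, giving $\cssim_{\nu_n}(f,g)=\mathrm{SSIM}(F_n,G_n)$.

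For claim~(2), by claim~(1) it suffices to show $\cssim_{\nu_n}(f,g)\to\cssim_\nu(f,g)$, where $\nu$ is the normalized Lebesgue measure on $\Omega$. The key observation is that, by \eqref{eq:cssim}, $\cssim$ is a rational function of the five statistics $\mu_f,\mu_g,\sigma_{ff},\sigma_{gg},\sigma_{fg}$; since $f,g\ge0$ and $\sigma_{ff},\sigma_{gg}\ge0$, the two denominators are bounded below by $c_1>0$ and $c_2>0$, so this function is continuous on the range of arguments that occurs. It therefore remains only to prove that each statistic computed with respect to $\nu_n$ converges to the same statistic computed with respect to $\nu$.

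This final point is exactly convergence of Riemann sums. As $n\to\infty$ we also have $m=cn\to\infty$, so the common mesh $h_n=(b-a)/n=(d-c)/m$ tends to zero and the equally spaced grid refines; equivalently, $\nu_n$ converges weak-$*$ to $\nu$. Applying $\int_\Omega h\,d\nu_n\to\int_\Omega h\,d\nu$ to the continuous integrands $f,g,f^2,g^2,fg$, and writing $\sigma_{fg}=\int_\Omega fg\,d\nu-\mu_f\mu_g$, yields convergence of all five statistics and hence of $\cssim_{\nu_n}(f,g)$ by the continuity noted above. The one step requiring genuine care is this Riemann-sum limit, and specifically the fact that $\nu_n$ charges only the $(m-1)(n-1)$ interior grid points: one must check that omitting the boundary layer is harmless. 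Since $f,g$ and their products are bounded and uniformly continuous on the compact set $\Omega$, the difference between the grid average and the exact integral is controlled by the modulus of continuity at scale $h_n$ together with the $O(1/n)$ relative measure of the neglected boundary strip, and therefore vanishes in the limit.
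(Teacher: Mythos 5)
Your proof is correct and follows essentially the same route as the paper's: claim (1) by unwinding the definitions of the statistics under the counting measure, and claim (2) by convergence of Riemann sums of the discrete statistics to their continuous counterparts, combined with the continuity of the cSSIM as a rational function whose denominators are bounded below by $c_1$ and $c_2$. You are in fact more thorough than the paper, whose proof only explicitly verifies convergence of the means (via auxiliary piecewise-constant functions) and leaves the variance and covariance terms, the stability of the rational expression, and the omitted boundary strip implicit.
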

\begin{proof}
The first point follows directly from the definition \eqref{eq:cssim} of the cSSIM.
To prove the second point we can associate to $F_n, G_n$ the piece-wise constant functions on $\Omega$
\begin{equation*}
    f_{n}\coloneqq\sum_{i,j=0}^{m,n}{f(p_i,q_j)\chi_{[p_i,p_{i+1}[\times [q_j,q_{j+1}[}},\quad\quad
    g_{n}\coloneqq\sum_{i,j=0}^{m,n}{g(p_i,q_j)\chi_{[p_i,p_{i+1}[\times [q_j,q_{j+1}[}},
\end{equation*}
where $\chi$ is the indicator function. Then, in this view, the sample means $\mu_{F_n}$ and $\mu_{G_n}$ of $F_n$ and $G_n$ are equivalent to the normalized Riemann integrals of $f_n$ and $g_n$. Since $f,g$ are bounded and continuous almost everywhere, such integrals converge to the normalized Lebesgue means $\mu_f$ and $\mu_g$ as $n$ tends to infinity.
\end{proof}

As a result of Proposition \ref{prop:ssimtocssim}, the cSSIM (or $\wcssim$) can be interpreted as the SSIM (or $\wssim$) computed on two infinite-resolution images. 
Alternatively, $\mathrm{SSIM}$ is a good approximation of the $\mathrm{cSSIM}$ as the resolution gets large enough. In this sense, the results of the following sections, and in particular the analysis of convergence of the $\cssim$, may be interpreted in terms of expected rate of approximability of an image when super-resolution techniques are applied.

\section{Bounding the cSSIM via the $L_2$ norm}\label{sec:cssim_l2}
We are now interested in linking the $\cssim$ to more classical error metrics, and especially the $L_2$ error. 

It turns out that the analysis greatly simplifies if one first analyzes the unweighted case, so we start with it in Section \ref{sec:bound_unweight}. The general result follows by manipulation of this basic case, and is derived in Section \ref{sec:bound_weight}.

\subsection{The global and unweighted case}\label{sec:bound_unweight}
As a first step, we consider the two multiplicative terms that define the cSSIM in \eqref{eq:cssim} and express them in terms of $L_1$ and $L_2$ norms. The 
following lemma is a simple adaptation of the argument in \cite[Section A]{Brunet12a}, i.e., we just rephrase the same computations using the $L_1$ and 
$L_2$ norm instead of the discrete $2$-norm used in the cited paper. 

\begin{lemma}\label{lemma:nmse}
For $f, g\in L_2^+(\Omega)$ and $c_1, c_2>0$ define
\begin{align*}
M(f, g)\coloneqq \frac{2\mu_f\mu_g + c_1}{\mu_f^2 + \mu_g^2 + c_1},
\qquad
S(f, g)\coloneqq\frac{2\sigma_{fg} + c_2}{\sigma_{ff} + \sigma_{gg} + c_2}.
\end{align*}
Then
\begin{align}
1 -\phantom{S}M(f, g) &\leq \frac{\norm{L_1}{f-g}^2}{\mu_f^2 + \mu_g^2 + c_1}\label{eq:one_minus_m}\\
1 -\phantom{M}S(f, g) &= \frac{\norm{L_2}{(f-\mu_f)-(g-\mu_g)}^2}{\sigma_{ff} + \sigma_{gg} + c_2}\label{eq:one_minus_s}.
\end{align}
\end{lemma}
\begin{proof}
By direct computation we have
\begin{align*}
1 - M(f, g)
&=1-\frac{2\mu_f\mu_g + c_1}{\mu_f^2 + \mu_g^2 + c_1}
=\frac{\mu_f^2 + \mu_g^2 + c_1 - 2\mu_f\mu_g - c_1}{\mu_f^2 + \mu_g^2 + c_1}
=\frac{(\mu_f - \mu_g)^2}{\mu_f^2 + \mu_g^2 + c_1}.
\end{align*}
Moreover, the numerator in last term can be bounded via
\begin{equation*}
\mu_f - \mu_g = \int_\Omega (f - g) dx\leq  \int_\Omega |f - g| dx = \norm{L_1}{f-g}, 
\end{equation*}
and this proves \eqref{eq:one_minus_m}. 
Similarly, for $S$ we obtain
\begin{align*}
1 - S(f, g)
&=1-\frac{2\sigma_{fg} + c_2}{\sigma_{ff} + \sigma_{gg} + c_2}
=\frac{\sigma_{ff} + \sigma_{gg} + c_2 - 2\sigma_{fg} - c_2}{\sigma_{ff} + \sigma_{gg} + c_2}
=\frac{\sigma_{ff} + \sigma_{gg} - 2\sigma_{fg}}{\sigma_{ff} + \sigma_{gg} + c_2},
\end{align*}
and in this case we have 
\begin{align*}
\sigma_{ff} + \sigma_{gg} - 2\sigma_{fg}
&=\norm{L_2}{f-\mu_f}^2 + \norm{L_2}{g-\mu_g}^2- 2\inner{L_2}{f-\mu_f, g-\mu_g}\\
&=\norm{L_2}{(f-\mu_f)-(g-\mu_g)}^2,
\end{align*}
which proves \eqref{eq:one_minus_s}.
\end{proof}
Next, we prove a simple bound on $M$ and $S$.
\begin{lemma}\label{lemma:s_smaller_than_one}
Let $f,g\in L_2^+(\Omega)$. Then
$0\leq M(f, g) \leq 1$, $|S(f, g)| \leq 1
$.
\end{lemma}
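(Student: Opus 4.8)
The plan is to prove the two bounds separately, using the explicit expressions for $M$ and $S$ already derived in Lemma \ref{lemma:nmse}, together with elementary inequalities for the means, variances, and covariance of nonnegative $L_2$ functions.

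For $M(f,g)$, first I would observe that since $f,g\in L_2^+(\Omega)$ we have $\mu_f,\mu_g\ge 0$, hence $\mu_f\mu_g\ge 0$ and the numerator $2\mu_f\mu_g+c_1>0$, so $M(f,g)\ge 0$. For the upper bound I would use the identity from the proof of Lemma \ref{lemma:nmse}, namely $1-M(f,g)=(\mu_f-\mu_g)^2/(\mu_f^2+\mu_g^2+c_1)$. Since $(\mu_f-\mu_g)^2\ge 0$ and the denominator is strictly positive, the whole fraction is nonnegative, giving $M(f,g)\le 1$ immediately. This direction is entirely routine.

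For $S(f,g)$, the bound $S(f,g)\le 1$ follows from \eqref{eq:one_minus_s} of Lemma \ref{lemma:nmse}: the numerator $\norm{L_2}{(f-\mu_f)-(g-\mu_g)}^2$ is a squared norm and thus nonnegative, while the denominator $\sigma_{ff}+\sigma_{gg}+c_2$ is strictly positive (variances are nonnegative and $c_2>0$), so $1-S(f,g)\ge 0$. The lower bound $S(f,g)\ge -1$ is the one genuine computation: I would show $1+S(f,g)\ge 0$, that is $S(f,g)\ge -1$. Writing $1+S(f,g)=(\sigma_{ff}+\sigma_{gg}+2\sigma_{fg}+2c_2)/(\sigma_{ff}+\sigma_{gg}+c_2)$ and noting that the extra $c_2$ in the numerator keeps it positive, the key point is that $\sigma_{ff}+\sigma_{gg}+2\sigma_{fg}=\norm{L_2}{(f-\mu_f)+(g-\mu_g)}^2\ge 0$, so the numerator is bounded below by the nonnegative squared norm plus $c_2>0$, hence strictly positive.

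I expect no serious obstacle here: the only mildly delicate point is keeping track of which quantities are guaranteed nonnegative. The nonnegativity of $\mu_f,\mu_g$ relies on $f,g\ge 0$, whereas the variance and covariance estimates rely instead on the Cauchy--Schwarz-type identity $\sigma_{ff}+\sigma_{gg}\pm 2\sigma_{fg}=\norm{L_2}{(f-\mu_f)\pm(g-\mu_g)}^2$, which holds regardless of sign. Assembling these observations with the strict positivity of the stabilizers $c_1,c_2>0$ to guarantee the denominators never vanish completes the argument.
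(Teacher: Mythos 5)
Your proof is correct, and the $M$ part coincides with the paper's argument (nonnegativity of $\mu_f,\mu_g$ for the lower bound, $(\mu_f-\mu_g)^2\geq 0$ for the upper bound). For the $S$ part you take a mildly but genuinely different route: the paper establishes $|S(f,g)|\leq 1$ in one stroke by reducing it to $2\left|\sigma_{fg}\right|\leq \sigma_{ff}+\sigma_{gg}$, which it proves via the Cauchy--Schwarz inequality $\left|\sigma_{fg}\right|\leq \norm{L_2}{f-\mu_f}\norm{L_2}{g-\mu_g}$ followed by $\left(\norm{L_2}{f-\mu_f}-\norm{L_2}{g-\mu_g}\right)^2\geq 0$. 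You instead split the two-sided bound into $S\leq 1$ and $S\geq -1$ and derive each from the nonnegativity of a perfect square: $1-S$ has numerator $\norm{L_2}{(f-\mu_f)-(g-\mu_g)}^2$ (which is exactly identity \eqref{eq:one_minus_s} of Lemma \ref{lemma:nmse}, so you are reusing already-established material), and $1+S$ has numerator $\norm{L_2}{(f-\mu_f)+(g-\mu_g)}^2+2c_2>0$. Your version is slightly more elementary in that it bypasses Cauchy--Schwarz entirely and makes the symmetry between the two bounds explicit; the paper's version is marginally stronger in that it bounds $2\left|\sigma_{fg}\right|+c_2$ rather than $\left|2\sigma_{fg}+c_2\right|$ directly, but for the stated lemma the two arguments deliver exactly the same conclusion. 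No gaps.
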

\begin{proof}
By the definition of $M$, the condition on is equivalent to show that $0\leq 2\mu_f\mu_g + c_1\leq \mu_f^2+\mu_g^2+c_1$, which is trivially true since $\mu_f, \mu_g, c_1\geq 0$ (for the lower bound), and $(\mu_f-\mu_g)^2\geq 0$ (for the upper bound).

The result for $S$ is equivalent by definition to prove that $\left|2\sigma_{fg} + c_2\right| \leq \sigma_{ff} + \sigma_{gg} + c_2$, and since 
$\left|2\sigma_{fg} + c_2\right|\leq 2\left|\sigma_{fg}\right| + c_2$ it is sufficient to prove that 
$2\left|\sigma_{fg}\right| + c_2\leq \sigma_{ff} + \sigma_{gg} + c_2$, or equivalently that $0 \leq \sigma_{ff} + \sigma_{gg}-2\left|\sigma_{fg}\right|$.
From the definition \eqref{eq:mu_and_sigma} of $\sigma_{fg}$ and the Cauchy-Schwartz inequality we have 
\begin{equation*}
|\sigma_{fg}| =  \left|\inner{L_2}{f-\mu_f, g-\mu_g}\right|\leq \norm{L_2}{f-\mu_f} \norm{L_2}{g-\mu_g},
\end{equation*}
and thus, again by definition of $\sigma_f$, $\sigma_g$, we obtain
\begin{align*}
\sigma_{ff} + \sigma_{gg} - 2\left|\sigma_{fg}\right|
&\geq \norm{L_2}{f-\mu_f}^2 + \norm{L_2}{g-\mu_g}^2 - 2 \norm{L_2}{f-\mu_f} \norm{L_2}{g-\mu_g}\\
&=\left(\norm{L_2}{(f-\mu_f)} - \norm{L_2}{(g-\mu_g)}\right)^2\geq 0,
\end{align*}
which concludes the proof.
\end{proof}

These lemmas allows us to derive the following key estimate. The result shows that the approximation error between two functions $f$ and $g$, as measured by 
the cSSIM, can be controlled by the squared $L_2$ distance of the two functions.

\begin{theorem}\label{thm:cssim_upper_bound}
Let $f,g\in L_2^+(\Omega)$ and $c_1, c_2>0$. Then it holds 
\begin{align*}
\left|1 - \cssim(f, g)\right|
&\leq c_{fg} \norm{L_2}{f-g}^2
\leq c_f \norm{L_2}{f-g}^2
\leq c \norm{L_2}{f-g}^2,
\end{align*}
where
\begin{align}\label{eq:constant_cf_cfg}
c_{fg}&\coloneqq\frac{4}{\sigma_{ff} + \sigma_{gg} + c_2} + \frac{1}{\mu_f^2 + \mu_g^2 + c_1},&
c_f&\coloneqq\frac{4}{\sigma_{ff} + c_2} + \frac{1}{\mu_f^2 + c_1},&
c&\coloneqq\frac{4 c_1 + c_2}{c_1c_2}.\;\; 
\end{align}
\end{theorem}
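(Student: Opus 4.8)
The plan is to decompose the dissimilarity $1 - \cssim(f,g)$ into contributions from the luminance factor $M(f,g)$ and the structure factor $S(f,g)$, and then bound each using the identities from Lemma~\ref{lemma:nmse} together with the boundedness from Lemma~\ref{lemma:s_smaller_than_one}. Writing $\cssim(f,g) = M(f,g)\,S(f,g)$, I would first establish the algebraic identity
\begin{equation*}
1 - MS = (1-M)S + (1-S),
\end{equation*}
which splits the total dissimilarity into the luminance and structure parts. Taking absolute values and applying the triangle inequality gives $|1 - \cssim(f,g)| \leq |1-M|\,|S| + |1-S|$. Since Lemma~\ref{lemma:s_smaller_than_one} guarantees $|S| \leq 1$ and $0 \leq M \leq 1$, the first term is controlled by $|1-M| = 1-M$, so that $|1-\cssim(f,g)| \leq (1-M) + |1-S|$.

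Next I would invoke Lemma~\ref{lemma:nmse} to rewrite both terms in norm form: the bound \eqref{eq:one_minus_m} gives $1-M \leq \norm{L_1}{f-g}^2 / (\mu_f^2+\mu_g^2+c_1)$, and the identity \eqref{eq:one_minus_s} gives $1-S = \norm{L_2}{(f-\mu_f)-(g-\mu_g)}^2 / (\sigma_{ff}+\sigma_{gg}+c_2)$ (noting this quantity is nonnegative, so the absolute value is harmless). The key observation is that each numerator is dominated by $\norm{L_2}{f-g}^2$: for the luminance term one uses $\norm{L_1}{f-g} \leq \norm{L_2}{f-g}$ (valid because $\nu$ is a probability measure, so $L_2 \hookrightarrow L_1$ with constant one), hence $\norm{L_1}{f-g}^2 \leq \norm{L_2}{f-g}^2$; for the structure term one observes that $(f-\mu_f)-(g-\mu_g) = (f-g) - (\mu_f - \mu_g)$, which is exactly $(f-g)$ minus its own mean, and subtracting the mean does not increase the $L_2$ norm, so $\norm{L_2}{(f-\mu_f)-(g-\mu_g)}^2 \leq \norm{L_2}{f-g}^2$. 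Combining these yields the first inequality with constant $c_{fg}$ as defined in \eqref{eq:constant_cf_cfg}, where the factor $4$ in front of the structure denominator would emerge from tracking the $|S|\le1$ bound carefully (I expect it to come from bounding $\sigma_{ff}+\sigma_{gg}$ against $4\sigma_{ff}$ or an analogous symmetrization, rather than from the clean split above, so this constant is where I would pay closest attention).

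The remaining two inequalities are monotonicity statements in the denominators. Passing from $c_{fg}$ to $c_f$ amounts to dropping the nonnegative terms $\sigma_{gg}$ and $\mu_g^2$ from the respective denominators, which can only increase each fraction; this requires no new idea. Passing from $c_f$ to the universal constant $c = (4c_1+c_2)/(c_1c_2)$ follows by further dropping $\sigma_{ff} \geq 0$ and $\mu_f^2 \geq 0$, leaving $4/c_2 + 1/c_1 = (4c_1+c_2)/(c_1c_2)$. The main obstacle I anticipate is pinning down the exact constant $4$ in the structure term of $c_{fg}$: the naive split $|1-MS| \leq (1-M) + (1-S)$ suggests a coefficient of $1$ on the structure denominator, so recovering the stated factor $4$ likely requires a slightly different grouping—perhaps bounding $|1-S|$ by twice the one-sided quantity, or handling the product $MS$ via a symmetric estimate that introduces the extra factor. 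I would reconcile the clean telescoping identity with the claimed constants before committing to the final chain, since everything downstream is routine once that coefficient is correctly justified.
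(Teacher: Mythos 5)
Your proposal is correct and follows essentially the same route as the paper: the decomposition $1-MS=(1-M)S+(1-S)$, the bound $|S|\leq 1$ from Lemma~\ref{lemma:s_smaller_than_one}, the two identities of Lemma~\ref{lemma:nmse}, the embedding $\norm{L_1}{f-g}\leq\norm{L_2}{f-g}$, and the monotone chain $c_{fg}\leq c_f\leq c$ are all exactly the paper's steps. The one place you diverge is also the one place you flag as your ``main obstacle,'' and your worry there is unfounded: your observation that $(f-\mu_f)-(g-\mu_g)=h-\mu_h$ with $h:=f-g$, together with $\norm{L_2}{h-\mu_h}^2=\norm{L_2}{h}^2-\mu_h^2\leq\norm{L_2}{h}^2$, is a valid and in fact \emph{sharper} estimate than the paper's. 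The paper obtains the factor $4$ not from any symmetrization of $\sigma_{ff}+\sigma_{gg}$ but from the cruder triangle-inequality chain $\norm{L_2}{h-\mu_h}\leq\norm{L_2}{h}+|\mu_h|\leq\norm{L_2}{h}+\norm{L_1}{h}\leq 2\norm{L_2}{h}$, which squares to $4\norm{L_2}{f-g}^2$. Your argument yields coefficient $1$ in place of $4$ on the structure term, hence a constant bounded above by $c_{fg}$, and the stated inequality follows a fortiori; there is nothing to reconcile, and no different grouping is needed. The rest of your chain (dropping $\sigma_{gg},\mu_g^2$ to get $c_f$, then $\sigma_{ff},\mu_f^2$ to get $c=4/c_2+1/c_1=(4c_1+c_2)/(c_1c_2)$) matches the paper verbatim.
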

\begin{proof}
Since $\cssim(f, g) = M(f, g) \cdot S(f, g)$ by \eqref{eq:cssim}, and since $\left|S(f, g)\right|\leq 1$ by Lemma \ref{lemma:s_smaller_than_one}, it follows that 
\begin{align}\label{eq:plus_minus_s}
|1 - \cssim(f, g)|
&= \left|1 -  S(f, g) +  S(f, g)-  M(f, g) S(f, g)\right|\\
&= \left|1 - S(f, g)\right| + \left|S(f, g)\right| \left|1-  M(f, g)\right|\nonumber\\
&\leq \left|1 - S(f, g)\right| + \left|1-  M(f, g)\right|\nonumber,
\end{align}
where the second equality holds because all terms are already positive. Lemma \ref{lemma:nmse} thus gives
\begin{align}\label{eq:ssim_intermediate}
|1 - \cssim(f, g)|
&\leq \frac{\norm{L_2}{(f-\mu_f)-(g-\mu_g)}^2}{\sigma_{ff} + \sigma_{gg} + c_2} + \frac{\norm{L_1}{f-g}^2}{\mu_f^2 + \mu_g^2 + c_1}.
\end{align}
Now, since the measure is normalized, it holds $\norm{L_2}{c} = |c|$ for any constant function $c$, and by the H\"older inequality we have $\norm{L_1}{f}\leq 
\norm{L_2}{f}$. Moreover, $\left|\mu_f-\mu_g\right| = \norm{L_1}{f-g}$ since both functions are non-negative. It follows that
\begin{align*}
\norm{L_2}{(f-\mu_f)-(g-\mu_g)}
&\leq \norm{L_2}{f-g} + \norm{L_2}{\mu_f-\mu_g}
= \norm{L_2}{f-g} + \left|\mu_f-\mu_g\right|\\
&\leq \norm{L_2}{f-g} + \norm{L_1}{f-g}
\leq 2\norm{L_2}{f-g}.
\end{align*}
Combining this bound with \eqref{eq:ssim_intermediate}, and bounding again the $L_1$ norm with the $L_2$ norm in the second term,  gives
\begin{equation*}
\left|1 - \cssim(f, g)\right|
\leq \frac{4\norm{L_2}{f-g}^2}{\sigma_{ff} + \sigma_{gg} + c_2} + \frac{\norm{L_2}{f-g}^2}{\mu_f^2 + \mu_g^2 + c_1}
=c_{fg} \norm{L_2}{f-g}^2,
\end{equation*}
where $c_{fg}\coloneqq4/(\sigma_{ff} + \sigma_{gg} + c_2) + 1/(\mu_f^2 + \mu_g^2 + c_1)$. Finally 
\begin{equation*}
c_{fg}\leq\frac{4}{\sigma_{ff} + c_2} + \frac{1}{\mu_f^2 + c_1}=:c_f,
\end{equation*}
and since $\mu_f, \sigma_{ff}\geq 0$ we have $c_f\leq {4}/{c_2} + {1}/{c_1}:=c$, and the proof is complete.
\end{proof}

\subsection{The local and weighted case}\label{sec:bound_weight}
Using the relation between the $\wcssim$ and the $\cssim$ outlined in Section \ref{sec:relation}, and especially Equation \eqref{eq:wcssim_as_mean_cssim}, we 
are in the position to prove the following corollary of Theorem \ref{thm:cssim_upper_bound}.

\begin{corollary}\label{cor:wcssim_upper_bound}
Under the assumptions of Definition \ref{def:wcssim}, let $f,g\in L_2^+(\Omega)$. Then 
\begin{equation*}
\left|1 - \wcssim(f, g)\right|
\leq C_{fg} \norm{L_2(\Omega)}{f-g}^2
\leq C_f \norm{L_2(\Omega)}{f-g}^2
\leq c \norm{L_2(\Omega)}{f-g}^2,
\end{equation*}
where $c$ is defined as in Theorem \ref{thm:cssim_upper_bound} and
\begin{align*}
C_{fg}&\coloneqq\max\limits_{x\in\Omega}\frac{4}{\sigma_{ff}(x) + \sigma_{gg}(x) + c_2} + \frac{1}{\mu_f(x)^2 + \mu_g(x)^2 + c_1},\\ 
C_f&\coloneqq\max\limits_{x\in\Omega}\frac{4}{\sigma_{ff}(x) + c_2} + \frac{1}{\mu_f^2(x) + c_1}.\nonumber 
\end{align*}
\end{corollary}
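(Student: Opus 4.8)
The plan is to deduce the corollary directly from Theorem \ref{thm:cssim_upper_bound} by exploiting the representation \eqref{eq:wcssim_as_mean_cssim}, which expresses $\wcssim_\nu(f,g)$ as the average $\int_\Omega \cssim_{\nu_{w,x}}(f,g)\,d\nu(x)$ over the local measures $\nu_{w,x}$. Since $\nu$ is a probability measure, we have $1 = \int_\Omega 1\, d\nu(x)$, so by linearity $1 - \wcssim_\nu(f,g) = \int_\Omega \bigl(1 - \cssim_{\nu_{w,x}}(f,g)\bigr)\,d\nu(x)$. Applying the triangle inequality for integrals then gives $\left|1 - \wcssim(f,g)\right| \leq \int_\Omega \left|1 - \cssim_{\nu_{w,x}}(f,g)\right|\,d\nu(x)$, which reduces the problem to bounding each pointwise factor uniformly.

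Next I would invoke Theorem \ref{thm:cssim_upper_bound} with the measure $\nu_{w,x}$ in place of $\nu$. This yields, for each fixed $x$, the bound $\left|1 - \cssim_{\nu_{w,x}}(f,g)\right| \leq c_{fg}(x)\,\norm{L_2(\Omega,\nu_{w,x})}{f-g}^2$, where $c_{fg}(x)$ is the constant from \eqref{eq:constant_cf_cfg} but with $\mu_f, \mu_g, \sigma_{ff}, \sigma_{gg}$ replaced by the local quantities $\mu_f(x), \mu_g(x), \sigma_{ff}(x), \sigma_{gg}(x)$ defined in \eqref{eq:mu_and_sigma_weighted}. Bounding $c_{fg}(x)$ by its maximum over $\Omega$ gives precisely the constant $C_{fg}$ in the statement, and analogously $C_{fg} \leq C_f \leq c$ follows exactly as in the proof of the theorem, using $\mu_g(x)^2, \sigma_{gg}(x)\geq 0$ for the first inequality and $\mu_f(x)^2, \sigma_{ff}(x)\geq 0$ for the second.

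The one point requiring care is that the squared $L_2$ norm appearing on the right is $\norm{L_2(\Omega,\nu_{w,x})}{f-g}^2 = \int_\Omega |f-g|^2 w(y-x)\,d\nu(y)$, which depends on $x$, whereas the desired conclusion features the fixed norm $\norm{L_2(\Omega,\nu)}{f-g}^2$. The resolution is the pointwise weight estimate already recorded in Section \ref{sec:relation}: since $w \leq w_{\max}$, one has $\norm{L_2(\Omega,\nu_{w,x})}{f-g}^2 \leq w_{\max}\,\norm{L_2(\Omega,\nu)}{f-g}^2$ for every $x$. Absorbing this uniform factor $w_{\max}$ is the main structural obstacle, and I expect the cleanest route is either to fold $w_{\max}$ into the definitions of $C_{fg}$ and $C_f$, or — noting that the chain culminating in the universal constant $c = (4c_1+c_2)/(c_1c_2)$ must still hold — to handle it consistently throughout so that the final $c$-bound is not lost.

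Finally, after passing to the supremum over $x$ and pulling the now $x$-independent factor $\norm{L_2(\Omega)}{f-g}^2$ outside the integral (using again that $\nu$ is a probability measure, so $\int_\Omega d\nu(x) = 1$), the three-term inequality assembles verbatim. The only genuinely new ingredient beyond Theorem \ref{thm:cssim_upper_bound} is thus the interplay between the local weighting and the global norm, and I would make sure the treatment of $w_{\max}$ is stated explicitly so that the constants $C_{fg}, C_f$ in the corollary are correctly normalized.
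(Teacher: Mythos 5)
Your overall architecture matches the paper's: write $1-\wcssim(f,g)=\int_\Omega\bigl(1-\cssim_{\nu_{w,x}}(f,g)\bigr)\,d\nu(x)$, apply Theorem \ref{thm:cssim_upper_bound} with the probability measure $\nu_{w,x}$ to get the local constant $c_{fg}(x)$, and take its maximum over $x$ to obtain $C_{fg}\leq C_f\leq c$. Up to that point you are aligned with the paper's proof.

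However, the step you yourself single out as ``the main structural obstacle'' is where your argument genuinely falls short. Bounding pointwise via $\norm{L_2(\Omega,\nu_{w,x})}{f-g}^2\leq w_{\max}\norm{L_2(\Omega,\nu)}{f-g}^2$ and then ``folding $w_{\max}$ into the constants'' does not prove the corollary as stated: the constants $C_{fg}$, $C_f$ and especially the universal constant $c=(4c_1+c_2)/(c_1c_2)$ carry no factor of $w_{\max}$, and since $w$ is a local filter normalized by $\int_\Omega w(y-x)\,d\nu(y)=1$, its maximum is typically large (of order $r^{-d}$ for a filter supported in a ball of radius $r$), so the loss is not cosmetic. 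The missing idea is to \emph{not} bound the local norm pointwise but to integrate it over $x$ first and exchange the order of integration:
\begin{equation*}
\int_\Omega \norm{L_2(\Omega,\nu_{w,x})}{f-g}^2\,d\nu(x)
=\int_\Omega (f(y)-g(y))^2\left(\int_\Omega w(y-x)\,d\nu(x)\right)d\nu(y)
=\norm{L_2(\Omega,\nu)}{f-g}^2,
\end{equation*}
where the inner integral equals one by the normalization of $w$. This Fubini step recovers the global $L_2$ norm exactly, with no extraneous factor, and is what makes the chain $C_{fg}\leq C_f\leq c$ close with the same $c$ as in the unweighted theorem. Without it, you prove only a weaker inequality with constants inflated by $w_{\max}$.
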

\begin{proof}
Since for all $x\in\Omega$ the measure $\nu_{w, x}$ is a probability on $\Omega$, and since $f, g\in L_2^+(\Omega, \nu_{w, x})$, we can apply Theorem \ref{thm:cssim_upper_bound} to $q(x) = \cssim_{\nu_{w,x}}(f, g)$ to obtain
\begin{equation*}
\left|1 - q(x)\right|
\leq c_{fg}(x) \norm{L_2(\Omega, \nu_{w, x})}{f-g}^2\;\;\fa g\in L_2^+(\Omega),\;\fa x\in \Omega,  \end{equation*}
where we denote as $c_{fg}(x)$ the constant 
\begin{equation*}
c_{fg}(x)\coloneqq\frac{4}{\sigma_{ff}(x) + \sigma_{gg}(x) + c_2} + \frac{1}{\mu_f(x)^2 + \mu_g(x)^2 + c_1}.
\end{equation*}
Using this inequality together with the definition of the $\wcssim$, and using the fact that $\nu$ integrates to one, we obtain
\begin{align}\label{eq:cor_tmp}
\left|1 - \wcssim(f, g)\right| 
&= \left|1 - \int_\Omega q(x) d \nu(x)\right|
= \left|\int_\Omega (1 - q(x)) d \nu(x)\right|\nonumber\\
&\leq \int_\Omega \left|1 - q(x)\right| d \nu(x)
\leq \int_\Omega c_{fg}(x) \norm{L_2(\Omega, \nu_{w, x})}{f-g}^2 d \nu(x)\nonumber\\
&\leq \left(\max\limits_{x\in\Omega}c_{fg}(x)\right) \int_\Omega \norm{L_2(\Omega, \nu_{w, x})}{f-g}^2 d \nu(x).
\end{align}
Setting $C_{f, g}:=\max\limits_{x\in\Omega}c_{fg}(x)$, we have from the same estimate as in Theorem \ref{thm:cssim_upper_bound} that 
\begin{align*}
C_{f,g}
&\leq C_f
\coloneqq \max\limits_{x\in\Omega}\left(\frac{4}{\sigma_{ff}(x) + c_2} + \frac{1}{\mu_f(x)^2 + c_1}\right)\leq c.
\end{align*}
To estimate the second term we have instead
\begin{align*}
\int_\Omega \norm{L_2(\Omega, \nu_{w, x})}{f-g}^2 d \nu(x)
&= \int_\Omega \int_\Omega (f(y) - g(y))^2 w(y - x) d\nu(y) d\nu(x)\\
&= \int_\Omega (f(y) - g(y))^2 \int_\Omega w(y - x) d\nu(x) d\nu(y)\\
&= \int_\Omega (f(y) - g(y))^2 d\nu(y)
=\norm{L_2(\Omega, \nu)}{f-g}^2,
\end{align*}
since $\int_\Omega w(y - x) d\nu(x)=1$ for all $x$. 
Inserting these two terms in \eqref{eq:cor_tmp} concludes the proof.
\end{proof}

\subsection{Conditions for equivalence}\label{sec:equivalence}
At this point one may ask if an inverse inequality holds too, proving that the two error measures are equivalent up to appropriate scaling factors. In this  section we prove that this is indeed the case, but only under additional assumptions.
We have the following.
\begin{theorem}\label{thm:equivalence}
Let $f, g\in L_2^+(\Omega)$, and let $R>0$ be such that 
$\norm{L_2(\Omega, \nu_{w, x})}{f}, \norm{L_2(\Omega, \nu_{w, x})}{g}\leq R$ for all $x\in\Omega$ (e.g., $\norm{L_2}{f},\norm{L_2}{g}\leq R$).
Then it holds
\begin{equation}\label{eq:lower_bound}
\frac{1}{4 R^2 + c_2} \left(\norm{L_2}{f-g}^2 - \int_\Omega\left(\mu_f(x) - \mu_g(x)\right)^2 d\nu(x)\right) \leq \left|1-\wcssim(f,g)\right|.
\end{equation}
In particular, if there exists $c'\in\left(0, 1/(4R^2 + c_2)\right]$ such that
\begin{equation}\label{eq:condition_lower_bound}
\int_\Omega\left(\mu_f(x) - \mu_g(x)\right)^2 d\nu(x)
\leq\left(1- c' (4 R^2 + c_2)\right) \norm{L_2}{f-g}^2,
\end{equation}
then
\begin{equation}\label{eq:lower_bound_equivalence}
c' \norm{L_2}{f-g}^2 \leq \left|1-\wcssim(f,g)\right|,
\end{equation}
i.e., the two measures are equivalent. 
Moreover, if $w_{\max}<1$ then \eqref{eq:condition_lower_bound} holds with  
\begin{equation}\label{eq:condition_lower_bound_w}
c'\coloneqq\frac{1-w_{\max}^2}{4R^2 + c_2}>0.
\end{equation}
\end{theorem}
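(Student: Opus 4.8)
The plan is to reduce the global statement to a pointwise lower bound on the local index $q(x)$ and then integrate, reusing the Fubini-type identity $\int_\Omega \norm{L_2(\Omega,\nu_{w,x})}{f-g}^2 d\nu(x) = \norm{L_2}{f-g}^2$ already established inside the proof of Corollary \ref{cor:wcssim_upper_bound}. Recall that $q(x) = M(x)S(x)$, where $M(x), S(x)$ are the two factors of Lemma \ref{lemma:nmse} computed with respect to the probability measure $\nu_{w,x}$; by Lemma \ref{lemma:s_smaller_than_one} one has $0\le M(x)\le 1$ and $|S(x)|\le 1$. Writing $\delta(x)\coloneqq\mu_f(x)-\mu_g(x)$, which is the $\nu_{w,x}$-mean of $f-g$, I would first record the bias--variance identity obtained from \eqref{eq:one_minus_s} applied to $\nu_{w,x}$, namely
\begin{equation*}
A(x)\coloneqq \norm{L_2(\Omega,\nu_{w,x})}{f-g}^2 - \delta(x)^2 = \norm{L_2(\Omega,\nu_{w,x})}{(f-g)-\delta(x)}^2 = \bigl(1-S(x)\bigr)\bigl(\sigma_{ff}(x)+\sigma_{gg}(x)+c_2\bigr)\ge 0.
\end{equation*}

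The central step, and the one I expect to be the main obstacle, is the pointwise estimate $1-q(x)\ge A(x)/(4R^2+c_2)$. The naive route $1-q(x)\ge 1-S(x)$ fails precisely when $S(x)<0$, so I would split on the sign of $S(x)$. If $S(x)\ge 0$, then $q(x)=M(x)S(x)\le S(x)$ gives $1-q(x)\ge 1-S(x)=A(x)/(\sigma_{ff}(x)+\sigma_{gg}(x)+c_2)\ge A(x)/(4R^2+c_2)$, where the denominator is controlled by $\sigma_{ff}(x),\sigma_{gg}(x)\le\norm{L_2(\Omega,\nu_{w,x})}{\cdot}^2\le R^2$. If instead $S(x)<0$, then $q(x)=M(x)S(x)\le 0$, so $1-q(x)\ge 1$; on the other side $A(x)\le\norm{L_2(\Omega,\nu_{w,x})}{f-g}^2\le\bigl(\norm{L_2(\Omega,\nu_{w,x})}{f}+\norm{L_2(\Omega,\nu_{w,x})}{g}\bigr)^2\le 4R^2$ by the triangle inequality, whence $A(x)/(4R^2+c_2)\le 1\le 1-q(x)$. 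It is exactly this second case that forces the constant $4R^2$, rather than the $2R^2$ that the first case alone would suggest; the triangle-inequality bound on $\norm{L_2(\Omega,\nu_{w,x})}{f-g}$ is the price paid for handling negative local structure terms.

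With the pointwise bound established, integrating over $\Omega$ against $\nu$ and invoking the Fubini identity yields
\begin{equation*}
1-\wcssim(f,g)=\int_\Omega\bigl(1-q(x)\bigr)\,d\nu(x)\ge\frac{1}{4R^2+c_2}\left(\norm{L_2}{f-g}^2-\int_\Omega\delta(x)^2\,d\nu(x)\right),
\end{equation*}
and since $1-\wcssim(f,g)\ge 0$ this is exactly \eqref{eq:lower_bound}. The equivalence \eqref{eq:lower_bound_equivalence} then follows by substituting hypothesis \eqref{eq:condition_lower_bound} into this bound: the bracketed term is at least $c'(4R^2+c_2)\norm{L_2}{f-g}^2$, and the factor $4R^2+c_2$ cancels.

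Finally, for the sufficient condition under $w_{\max}<1$, I would note that $1-c'(4R^2+c_2)=w_{\max}^2$ for the proposed $c'$, so that \eqref{eq:condition_lower_bound} reduces to proving $\int_\Omega\delta(x)^2\,d\nu(x)\le w_{\max}^2\norm{L_2}{f-g}^2$. This I would derive from the pointwise estimate $|\delta(x)|=\left|\int_\Omega(f-g)\,w(\cdot-x)\,d\nu\right|\le w_{\max}\norm{L_1}{f-g}\le w_{\max}\norm{L_2}{f-g}$, using $0\le w\le w_{\max}$ together with H\"older's inequality on the probability space; squaring and integrating the resulting constant bound against $\nu$ gives the claim. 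The constraints on $c'$ are then immediate, since $c'=(1-w_{\max}^2)/(4R^2+c_2)\in\left(0,1/(4R^2+c_2)\right]$ exactly when $w_{\max}<1$.
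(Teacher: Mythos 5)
Your proof is correct, and it follows the same overall architecture as the paper's: a pointwise lower bound on $1-q(x)$ via the centred quantity $A(x)=(1-S(x))\left(\sigma_{ff}(x)+\sigma_{gg}(x)+c_2\right)$, integration against $\nu$, the Fubini identity $\int_\Omega \norm{L_2(\Omega,\nu_{w,x})}{f-g}^2\, d\nu(x)=\norm{L_2}{f-g}^2$, the bound $\sigma_{ff}(x)+\sigma_{gg}(x)+c_2\le 4R^2+c_2$, and the same computation $|\mu_f(x)-\mu_g(x)|\le w_{\max}\norm{L_2}{f-g}$ for the final claim about $w_{\max}<1$. The one genuine difference is your explicit case split on the sign of $S(x)$, and it is worth keeping: the paper instead invokes the identity $|1-q(x)|=|1-S(x)|+|S(x)|\,|1-M(x)|$ from \eqref{eq:plus_minus_s} to conclude $|1-q(x)|\ge|1-S(x)|$, but that identity (and the resulting inequality) is only valid when $S(x)\ge 0$; for $S(x)<0$ one has $1-M(x)S(x)\le 1-S(x)$, so the paper's intermediate step is actually reversed there. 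Your treatment of the negative case --- $1-q(x)\ge 1$ together with $A(x)\le \norm{L_2(\Omega,\nu_{w,x})}{f-g}^2\le 4R^2<4R^2+c_2$ --- closes exactly this gap, and also explains, as you observe, why the constant must be $4R^2$ rather than the $2R^2$ that the nonnegative case alone would suggest. Everything else (the substitution of \eqref{eq:condition_lower_bound}, and the verification that $c'=(1-w_{\max}^2)/(4R^2+c_2)$ lies in the required interval) matches the paper's argument.
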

\begin{proof}
Applying the identity part of inequality \eqref{eq:plus_minus_s} to $\cssim_{\nu_{w,x}}$, we have 
\begin{equation*}
|1 - q(x)| = |1 - S(x)| + |S(x)| |1 - M(x)|
\geq |1 - S(x)|\;\;\fa x\in\Omega,
\end{equation*}
where $S(x)$ and $M(x)$ are the local and weighted versions of $S(f, g)$ and $M(f, g)$.

Furthermore, we can rewrite Equation \eqref{eq:one_minus_s} using the $L_2(\Omega, \nu_{w,x})$-inner product to get
\begin{equation*}
1 - S(x) 
= \frac{\norm{L_2(\Omega, \nu_{w,x})}{(f-\mu_f(x))-(g-\mu_g(x))}^2}{\sigma_{ff}(x) + \sigma_{gg}(x) + c_2},    
\end{equation*}
and thus, setting $h:=f-g$, for all $x\in\Omega$ it holds
\begin{align}\label{eq:cor_intermediate}
|1 - \wcssim(f, g)| 
&\geq \int_\Omega |1 - S(x)| d\nu(x)
\geq \int_\Omega\frac{\norm{L_2(\Omega, \nu_{w,x})}{h-\mu_h(x)}^2}{\sigma_{ff}(x) + \sigma_{gg}(x) + c_2} d\nu(x)\nonumber\\
&\geq 
\left(\min\limits_{x\in\Omega}\frac1{\sigma_{ff}(x) + \sigma_{gg}(x) + c_2} \right)\int_\Omega \norm{L_2(\Omega, \nu_{w,x})}{h-\mu_h(x)}^2 d\nu(x),
\end{align}
since all terms are non negative. Now for the last term we have
\begin{align*}
\int_\Omega \norm{L_2(\Omega, \nu_{w,x})}{h-\mu_h(x)}^2 d\nu(x)
&=\int_\Omega \left(\norm{L_2(\Omega, \nu_{w,x})}{h}^2 - \mu_h(x)^2\right) d\nu(x)\\
&=\norm{L_2(\Omega, \nu}{h}^2 - \int_\Omega\mu_h(x)^2 d\nu(x).
\end{align*}
Moreover, \eqref{eq:mu_and_sigma} gives 
\begin{align*}
\sigma_{ff}(x) 
&= \norm{L_2(\Omega,\nu_{w,x})}{f-\mu_f(x)}^2 
= \norm{L_2(\Omega,\nu_{w,x})}{f}^2 - \mu_f(x)^2 
\leq \norm{L_2(\Omega,\nu_{w,x})}{f}^2 + \mu_f(x)^2\\ 
&= \norm{L_2(\Omega,\nu_{w,x})}{f}^2 + \norm{L_1(\Omega,\nu_{w,x})}{f}^2
\leq 2\norm{L_2(\Omega,\nu_{w,x})}{f}^2
\leq 2 R^2,
\end{align*}
and similarly for $\sigma_{gg}(x)$. It follows that ${\sigma_{ff}(x) + \sigma_{gg}(x) + c_2}\leq 4 R^2 + c_2$ for all $x\in\Omega$, and inserting this upper bound in \eqref{eq:cor_intermediate} concludes the proof of \eqref{eq:lower_bound}.

Now, inserting the condition \eqref{eq:condition_lower_bound} into \eqref{eq:lower_bound} one obtains that 
\begin{align*}
\frac{1}{4 R^2 + c_2} &\left(\norm{L_2}{f-g}^2 - \int_\Omega\left(\mu_f(x) - \mu_g(x)\right)^2 d\nu(x)\right)\\
&\geq \frac{1}{4 R^2 + c_2} \left(1 - (1 - c'(4R^2 + c_2)\right)\norm{L_2}{f-g}^2
\geq c'\norm{L_2}{f-g}^2,
\end{align*}
which proves \eqref{eq:lower_bound_equivalence}. 

Finally, if $w_{\max}<1$ then clearly the constant $c'$ defined in \eqref{eq:condition_lower_bound_w} satisfies $0<c'<1/(4R^2 + c_2)$. It thus remains to prove that \eqref{eq:condition_lower_bound} is valid for this $c'$. This follows by direct computation, since $\nu$ is a probability measure and thus
\begin{align*}
\int_\Omega\left(\mu_f(x) - \mu_g(x)\right)^2 d\nu(x)
&=
\int_\Omega\left(\int_\Omega\left(f(y) - g(y)\right)w(y-x)d\nu(y)\right)^2 d\nu(x)\\
&\leq \int_\Omega\left(\int_\Omega\left|f(y) - g(y)\right|w(y-x)d\nu(y)\right)^2 d\nu(x)\\
&\leq \int_\Omega w_{\max}^2\norm{L_1(\Omega, \nu)}{f-g}^2 d\nu(x)\\
&= w_{\max}^2\norm{L_1(\Omega, \nu)}{f-g}^2
\leq w_{\max}^2\norm{L_2(\Omega, \nu)}{f-g}^2,
\end{align*}
and therefore \eqref{eq:condition_lower_bound} holds provided that $w_{\max}^2\leq (1-c'(4R^2+c_2)$, which holds with equality by the definition of $c'$.
\end{proof}

The theorem proves that the $\wcssim$ is equivalent to the $L_2$ norm under two additional conditions, namely the bound on $\norm{L_2}{f}$, $\norm{L_2}{g}$ and condition \eqref{eq:condition_lower_bound}.

These two conditions are quite different in nature. Indeed, since $\nu$ is a probability we have $\norm{L_2(\Omega)}{f} \leq \norm{L_\infty(\Omega)}{f}$, and 
thus for $f, g$ that represent images with bounded values it is immediate to find $R>0$ such that $\norm{L_2(\Omega,\nu_{w,x})}{f}\leq \norm{L_2(\Omega)}{f}\leq R$, and similarly for $g$. This is thus a not very restrictive requirement. On the other hand, condition \eqref{eq:condition_lower_bound} is not always satisfied, and we elaborate on its consequences in the following.

Nevertheless, we first point out that the theorem has in any case consequences on Theorem 
\ref{thm:cssim_upper_bound} and Corollary \ref{cor:wcssim_upper_bound}. Indeed, the condition \eqref{eq:condition_lower_bound} is easily met with the optimal constant $c':=1/(4 R^2 + c_2)$ if $f$ and $g$ are such that $\int_\Omega\left(\mu_f(x) - \mu_g(x)\right)^2 d\nu(x)=0$ (e.g., $\mu_f=\mu_g$ almost everywhere in $\Omega$).  
This in particular implies that the rates of Theorem 
\ref{thm:cssim_upper_bound} and Corollary \ref{cor:wcssim_upper_bound} (i.e., the exponent $2$ in the $L_2$ norm) are optimal, in the sense that they can not be improved if they have to hold for general $f, g\in L_2^+(\Omega)$. 

On the other hand, the condition \eqref{eq:condition_lower_bound} is not always verified, and indeed in general we can not expect an equivalence to hold. This means that there are cases when $\norm{L_2}{f-g}^2$ is converging to zero at a slower rate than $|1-\wssim(f, g)|$, or even not converging to zero at all. 

This fact, which is relevant in practice and not a novelty, is the fundamental reason why the SSIM is often preferred to the $L_2$ norm in imaging applications.
However, the lower bound of Theorem \ref{thm:equivalence} now gives a concrete hint on the reason why this situation may occur. Namely, since $\int_\Omega w(y-x)d\nu(y)=1$ by Definition \ref{def:wcssim}, to have a value $w_{\max}<1$ one needs to have a weight $w$ with wide enough support. 
In other words, to observe a practical benefit in using the SSIM instead of the $L_2$ error, a local enough weight should be chosen.

\begin{remark}
We point out that if a function $f\in L_2^+(\Omega)$ is approximated in the $L_2$ sense by a sequence $\{f_n\}\subset L_2^+(\Omega)$, since as observed before it holds 
\begin{equation*}
|\mu_f(x) -\mu_{f_n}(x)|
\leq \norm{L_1(\Omega,\nu_{w,x})}{f-f_n}
\leq \norm{L_2(\Omega,\nu_{w,x})}{f-f_n},
\end{equation*}
then we have $\mu_{f_n}(x)\to \mu_f(x)$. In this sense, we should expect to see an equivalence in the spirit of Theorem \ref{thm:equivalence} when the two 
error measures are used on $L_2$ converging approximations. We will verify this intuition in the numerical experiments.
Observe moreover that in the case of the $\cssim$ (i.e., $w\equiv 1$) the condition \eqref{eq:condition_lower_bound} is met if the global means satisfy $\mu_f=\mu_g$, which is a not very demanding assumption. 
\end{remark}

\section{Rates of convergence of the cSSIM for concrete image interpolation methods}\label{sec:int_methods}

In the following, we exploit the theoretical findings presented in Section \ref{sec:cssim_l2} and we provide some convergence results in terms of the cSSIM for commonly used image interpolation methods, i.e., we bound the theoretically expected cSSIM as the resolution gets larger. Completely analogous statements can be derived for the $\wcssim$.

First, for the case of gridded data we consider the bilinear interpolation method and the Hermite bicubic interpolation method. These are extensively used in image interpolation, and there exist state-of-the art implementations that do not require the knowledge of derivatives, and which can exploit several strategies to improve the computation of the interpolants. Nevertheless, we want to omit the error due to derivative approximation, and thus we provide explicit definitions of the algorithms in the following, which are equivalent to those standard ones, even if possibly less efficient.
Then, we consider the framework of kernel-based interpolation, which is particularly suitable for scattered data.

We consider for simplicity $\Omega=[a,b]\times[c,d]\subset\mathbb{R}^2$. We are interested in obtaining bounds on the convergence of these interpolation methods with respect to the cSSIM from known results on their $L_2$ convergence. The latter results are usually formulated in terms of the smoothness of the target function $f$, and to this end we will assume in the following that $f$ is an element of the Sobolev space $W_2^{\tau}(\Omega)$ of suitable fractional or integer order $\tau>0$ (see e.g. Chapter 3 in \cite{McLean2000}).
Moreover, we denote as $C^{k,l}(\Omega)$ the set of functions whose derivatives $\partial^{i+j} f/(\partial x^i \partial y^j)$ are continuous on $\Omega$ for 
$0\le i \le k$ and $0\le j \le l$.

The rate of approximation of the various methods are quantified in terms of the density and distribution of the interpolation points $X\subset \Omega$, that is quantified by means of the fill distance 
\begin{equation}
h_{X, \Omega} \coloneqq \sup\limits_{y\in\Omega} \min\limits_{x \in X}\|x-y\|,   
\end{equation}
which is a generalization of the grid size that is suitable also for scattered meshes.

\subsection{Bilinear interpolation}

Let $f\in C^{0,0}(\Omega),\;f\in W_2^2(\Omega)$, and let $E_{m,n}$ be a $m\times n$ equispaced grid of interpolation nodes in $\Omega$ with equal horizontal and vertical spacing $s>0$. Consequently, the fill distance is $h_{E_{m,n},\Omega}\coloneqq \sqrt{2}s/2$ and it represents in this case the pixel size. The unique bilinear interpolant $f_b$ of $f$ at $E_{m,n}$ is constructed upon $2\times 2$ local neighborhoods. More precisely, letting $\boldsymbol{z}_{11}=(x_1,y_1),\;\boldsymbol{z}_{21}=(x_2,y_1),\; \boldsymbol{z}_{12}=(x_1,y_2),\; \boldsymbol{z}_{22}=(x_2,y_2)\in E_{m,n}$ be the neighborhood nodes related to an evaluation point $\boldsymbol{\xi}=(\xi_1,\xi_2)\in\Omega$, that is $\boldsymbol{\xi}$ is contained in the rectangle defined by the vertices $\boldsymbol{z}_{11},\boldsymbol{z}_{21},\boldsymbol{z}_{12},\boldsymbol{z}_{22}$, we have
\begin{equation*}
    f_b(\xi_1,\xi_2)=c_0+c_1\xi_1+c_2\xi_2+c_3\xi_1\xi_2,
\end{equation*}
where the coefficients $c_0,\dots,c_3$ are determined by solving the linear system
\begin{equation}\label{eq:linsys_bil}
    \begin{bmatrix} 
1 & x_1 & y_1 & x_1y_1 \\
1 & x_1 & y_2 & x_1y_2 \\
1 & x_2 & y_1 & x_2y_1 \\
1 & x_2 & y_2 & x_2y_2 \\
\end{bmatrix}
\begin{bmatrix} 
c_0 \\
c_1 \\
c_2 \\
c_3 \\
\end{bmatrix}
=
\begin{bmatrix} 
f(\boldsymbol{z}_{11}) \\
f(\boldsymbol{z}_{21}) \\
f(\boldsymbol{z}_{12}) \\
f(\boldsymbol{z}_{22}) \\
\end{bmatrix}.
\end{equation}
The $L_2$-error between $f$ and $f_b$ on $\Omega$ can be bounded as (see e.g. \cite{Getreuer11})
\begin{equation*}
    \lVert f-f_b \lVert_{L_2}\le C h \lVert f \lVert_{W_2^{2}},
\end{equation*}
where $C>0$ is a constant independent of $h$. Therefore, from Theorem \ref{thm:cssim_upper_bound} we get
\begin{equation*}
   \left|1 - \cssim(f, f_b)\right| \le c_f C^2 h^2 \lVert f \lVert^2_{W_2^{2}}.
\end{equation*}

\subsection{Hermite bicubic interpolation}\label{sec:hermite}
Differently with respect to the bilinear interpolation case, the Hermite bicubic interpolant $f_c$ is built upon $4\times 4$ neighborhoods. In addition to the constraints imposed by the function values $f(\boldsymbol{z}_{11}),f(\boldsymbol{z}_{21}),f(\boldsymbol{z}_{12}),f(\boldsymbol{z}_{22})$, the function $f_c$ also interpolates $\partial f/ (\partial x)$, $\partial f/ (\partial y)$ and $\partial^2 f/ (\partial x\partial y)$ at $\boldsymbol{z}_{11},\boldsymbol{z}_{21},\boldsymbol{z}_{12},\boldsymbol{z}_{22}$. 
The extremal nodes of the neighborhood are indeed exploited in order to estimate such derivatives at the $2\times 2$ internal nodes. 

Therefore, the interpolant $f_c$ evaluated at $\boldsymbol{\xi}$ takes the form
\begin{equation*}
    f_c(\xi_1,\xi_2)=\sum_{0\le i,j\le 3}{c_{ij}\frac{\partial^{i+j} f}{\partial x^i \partial y^j}(\xi_1,\xi_2)},
\end{equation*}
where the coefficients $c_{ij}$ are determined by solving the linear system related to the interpolation task, similarly to \eqref{eq:linsys_bil}.

For $f\in C^{1,1}(\Omega)\cap W_2^4(\Omega)$, the paper \cite{Bialecki94} gives the bound
\begin{equation*}
    \lVert f-f_c \lVert_{L_2}\le C h^3 \lVert f \lVert_{W_2^{4}},
\end{equation*}
and thus by virtue of Theorem \ref{thm:cssim_upper_bound} we obtain
\begin{equation*}
   \left|1 - \cssim(f, f_c)\right| \le c_f C^2 h^6 \lVert f \lVert^2_{W_2^{4}}.
\end{equation*}
\subsection{Kernel-based interpolation}
Let $K:\Omega\times\Omega \to\R$ be a strictly positive definite kernel, i.e., for any set $X\subset \Omega$ of pairwise distinct points the kernel matrix $\mathsf{K}=(\mathsf{K}_{i,j})=K(\bs{x}_i,\bs{x}_j)$, $x_i, x_j\in X$ is positive definite. 
The kernel interpolant $f_k$ of $f\in C(\Omega)$ at $n$ pairwise distinct points $X\subset\Omega$ is defined as
\begin{equation*}
    f_k(\xi_1,\xi_2) = \sum_{i=1}^n{\alpha_i K(\bs{\xi},\bs{x}_i)},\quad \bs{x}_i\in X, \bs{\xi}\in \Omega,
\end{equation*}
with coefficients $\bs{\alpha}=(\alpha_1,\dots,\alpha_n)^{\intercal}\in\mathbb{R}^n$ that solve the linear system $\mathsf{K}\bs{\alpha}=\bs{f}$,
where $\mathsf{K}$ is the kernel matrix on $X$ and $\bs{f}=(f(\bs{x}_1),\dots,f(\bs{x}_n))^{\intercal}$.
It is known that if the kernel is additionally translational invariant, i.e., $K(\bs{x}, \bs{y})\coloneqq\Phi(\bs{x}-\bs{y})$ for some $\Phi:\R^d\to\R$, then under certain assumptions one can show that there exists $\tau>d/2$ such that for each $f\in W_2^\tau(\Omega)$ it holds 
\begin{equation*}
\norm{L_2}{f-f_k} \leq C h^{\tau} \norm{W_2^\tau}{f},
\end{equation*}
with a constant $C>0$ independent of $f$. We remark that the precise value of $\tau$ is given by the rate of polynomial decay of the Fourier transform of $\Phi$, and this value is connected to the smoothness of $K$. We do not give further explanations here, and we refer to \cite{Wendland2005} for a detailed discussion.
In this setting Theorem \ref{thm:cssim_upper_bound} gives that for each $f\in L_2^+(\Omega)\cap W_2^\tau(\Omega)$ it holds 
\begin{equation}\label{eq:sobolev_bound}
\left|1 - \cssim(f, f_k)\right| \leq c_f C^2  h^{2\tau} \norm{W_2^\tau}{f}^2.
\end{equation}
As notable examples, we report in particular the error bounds related to the two kernels considered in \cite{Marchetti2021a}. We remark that these new bounds are strict improvements over the results proven in that paper. We have the following:
\begin{itemize}
\item If $K$ is a $(d, k)$-Wendland kernel with $d=2$ and $k=1$, i.e., $\Phi(\bs{x}) = \varphi_1(\|\bs{x}\|)$ with $\varphi_1(r) = (1-r)_+^4 (4 r + 1)$, then $\tau = d/2 + k + 1/2 = 1 + 1 + 1/2 =5/2$ (see \cite{Wendland2005}), and thus \eqref{eq:sobolev_bound} gives
\begin{equation*}
\left|1 - \cssim(f, f_k)\right| \leq c_f C^2 h^{5} \norm{W_2^{5/2}}{f}^2.
\end{equation*}
\item If $K$ is a cubic Mat{\'e}rn kernel, i.e., $\Phi(\bs{x}) = \varphi_2(\|\bs{x}\|)$ with $\varphi_2(r) = e^{-r}(15 + 15r + 6r^2+r^3)$, then $\tau=(d+7)/2=9/2$ (see \cite{Wendland2005}), and thus \eqref{eq:sobolev_bound} gives
\begin{equation*}
\left|1 - \cssim(f, f_k)\right| \leq c_f C^2 h^{9} \norm{W_2^{9/2}}{f}^2.
\end{equation*}
\end{itemize}

Finally, we recall that it is common in practice to solve a regularized interpolation problem with kernel matrix $(\mathsf{K} + \lambda \mathsf{I})$ with $\lambda>0$ in order to improve the numerical conditioning of the system. Results are known to select $\lambda$ small enough such that the same convergence rates as in \eqref{eq:sobolev_bound} are obtained (see \cite{WendlandRieger2005}).

\section{Numerical tests}\label{sec:num_tests}
The experiments are carried out in \textsc{Python 3.6} and the code to replicate the examples is publicly available\footnote{\href{https://github.com/GabrieleSantin/cssim_convergence}{https://github.com/GabrieleSantin/cssim\_convergence}.}.
To verify the convergence estimates discussed in Section \ref{sec:int_methods}, we perform numerical tests taking the same examples studied in \cite{Marchetti2021a}, i.e. the functions $f_1,f_2:\Omega\longrightarrow\mathbb{R}$, $\Omega=[0,1]^2$, defined as
\begin{equation*}
    f_1(\bs{x})\coloneqq2(x_1x_2)^2-\sinc(x_1)\sinc(x_2)+1, \quad \quad f_2(\bs{x})\coloneqq e^{-(x_1+x_2)}-3x_1+x_2+5.
\end{equation*}
As interpolation sets we take equally spaced gridded data $X_i\subset \Omega$, $i=1,\dots,4$ with steps $s_i=2^{-i+2}/5$, i.e., $s_i = 0.4 , 0.2 , 0.1 , 0.05$. The reconstructions are then evaluated on a finer grid with step $10^{-2}$. Moreover, following Proposition \ref{prop:ssimtocssim}, to approximate the cSSIM we compute the SSIM on the evaluation grid, and then we express the results in terms of the dissimilarity $1-\mathrm{SSIM}$ and its weighted counterpart $1-\wssim$. The weighted index is computed with a $22\times 22$ constant weight matrix anchored at its center and normalized to unit sum.

We report in Figure \ref{fig:funct_interp} the results obtained by performing bilinear and bicubic interpolation, and kernel-based interpolation using $\varphi_1$ (the Wendland kernel) and $\varphi_2$ (the cubic Mat{\'e}rn kernel). 
Each figure shows the decay of the SSIM- and $\wssim$-dissimilarity indices, and compare them with the decay of the squared $L_2$ error for an increasing number of interpolation points.
As discussed before, in these experiments we use an own implementation of the interpolation methods, and in particular we use the exact values of the partial derivatives of the functions in the case of Hermite bicubic interpolation, in order to verify the investigated theoretical bounds. 
At a first glance, it is immediate to see that the dissimilarity indices decay at essentially the same rate of the $L_2$ error, both for the polynomial and kernel methods, thus confirming our theory.
We remark that in the case of the $\wssim$ for the bilinear interpolation of $f_1$ (left figure in Panel \ref{fig:bicunear_f1}) we have to extend the interpolation also to $s_i$ with $i=5, \dots, 9$ in order to observe this asymptotic rate.

\begin{figure}
\begin{center}
\begin{subfigure}{0.49\textwidth}
\includegraphics[width=\textwidth]{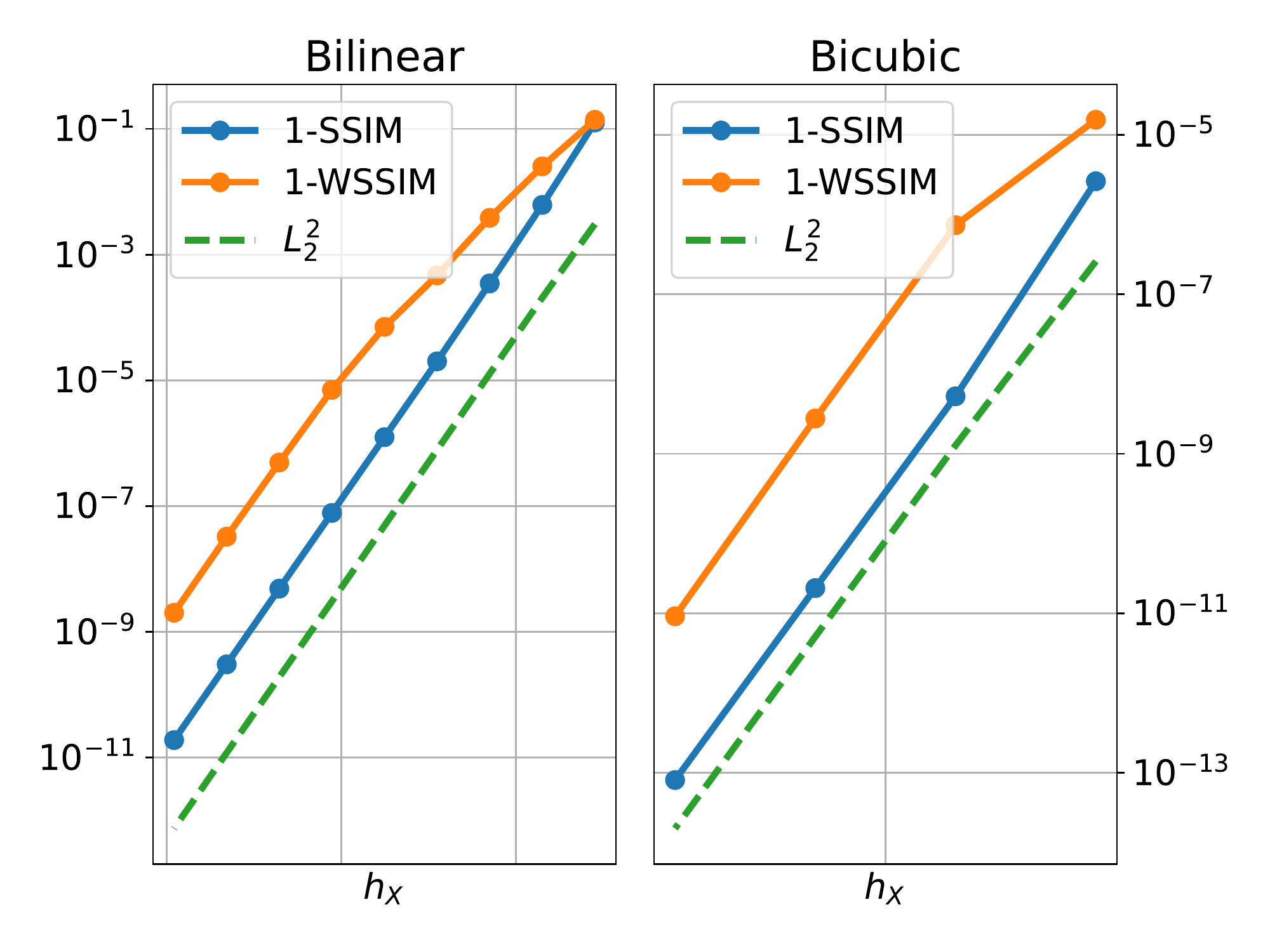}
\caption{}\label{fig:bicunear_f1}
\end{subfigure}
\begin{subfigure}{0.49\textwidth}
\includegraphics[width=\textwidth]{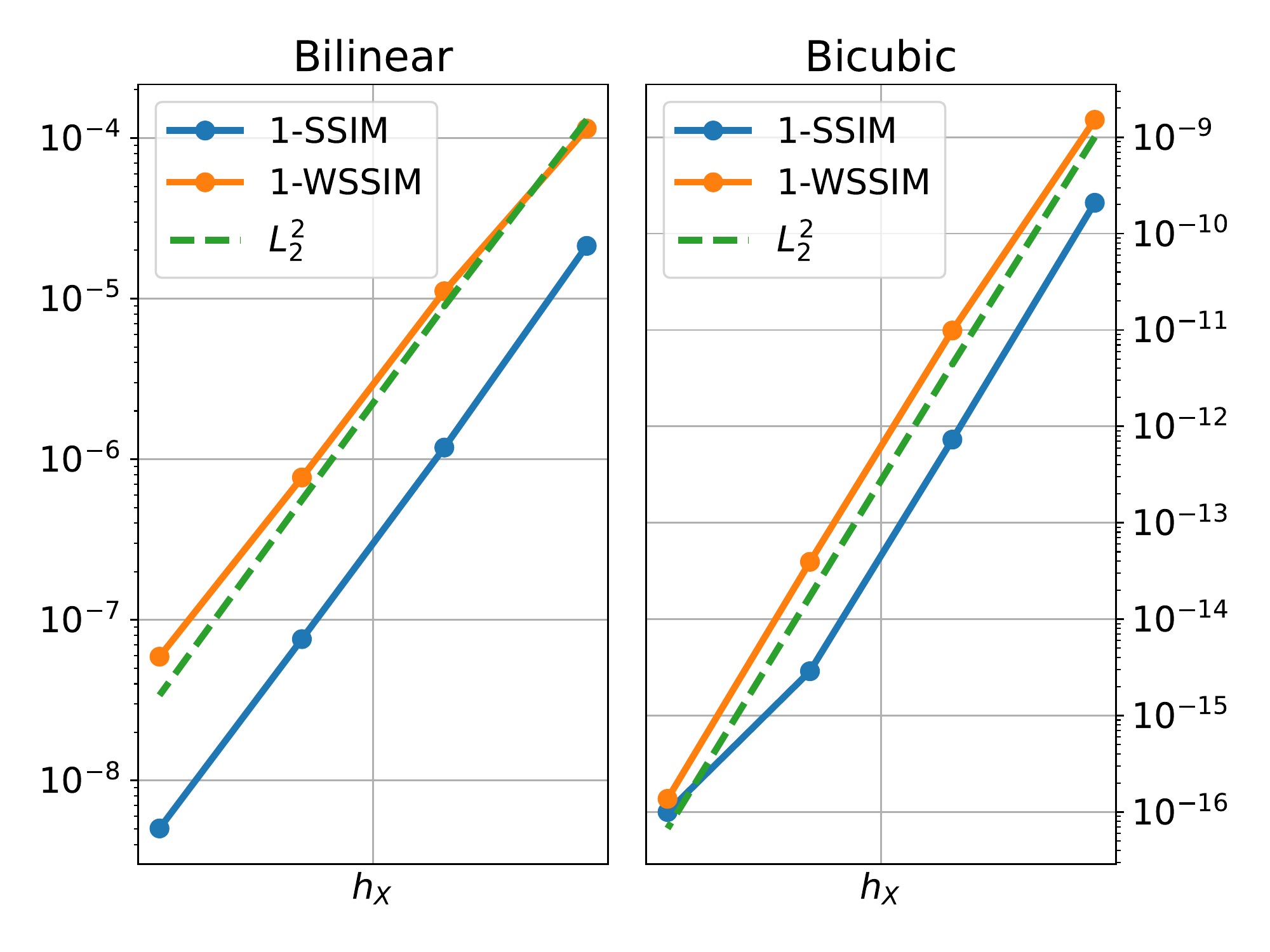}
\caption{}\label{fig:bicunear_f2}
\end{subfigure}
\begin{subfigure}{0.49\textwidth}
\includegraphics[width=\textwidth]{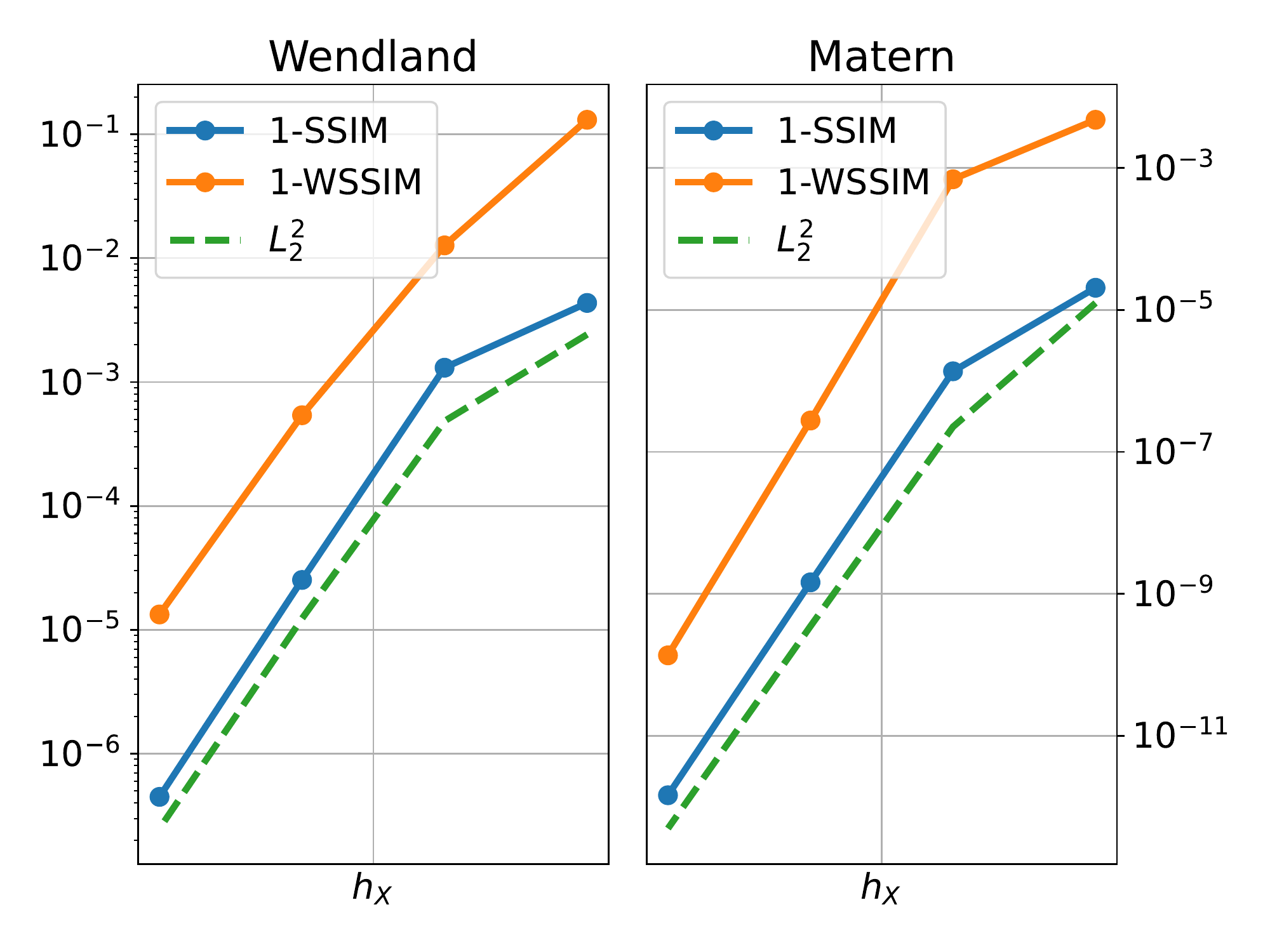}
\caption{}\label{fig:kernel_f1}
\end{subfigure}
\begin{subfigure}{0.49\textwidth}
\includegraphics[width=\textwidth]{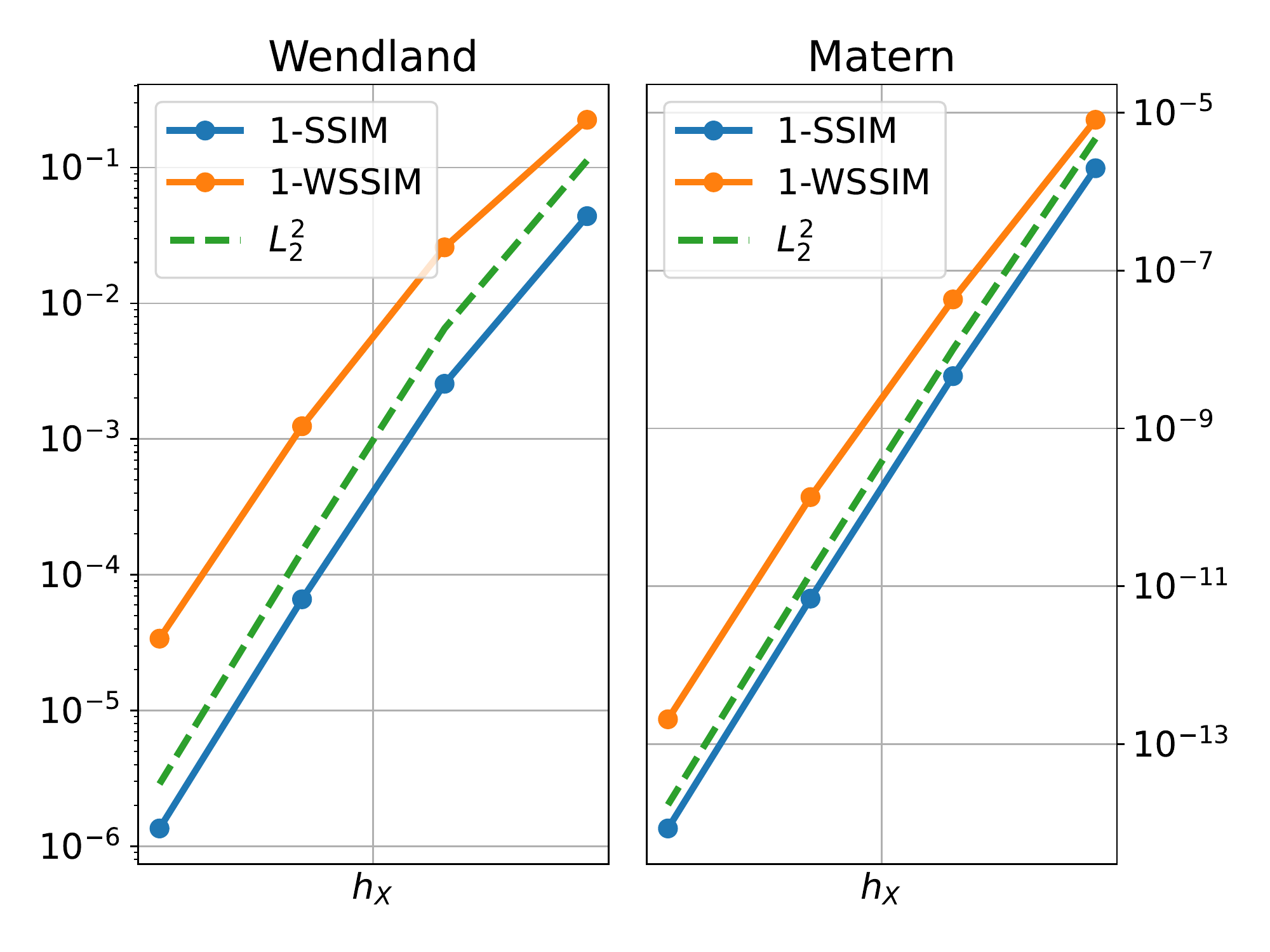}
\caption{}\label{fig:kernel_f2}
\end{subfigure}
\end{center}
\caption{Decay of the dissimilarity indices for bilinear and bicubic interpolation, with target function $f_1$ (Panel \eqref{fig:bicunear_f1}) and $f_2$ (Panel 
\eqref{fig:bicunear_f2}), and kernel interpolation with kernels $\varphi_1,\;\varphi_2$, with target function $f_1$ (Panel \eqref{fig:kernel_f1}) and $f_2$ 
(Panel \eqref{fig:kernel_f2}). The bilinar interpolant of $f_1$ (left figure in Panel \ref{fig:bicunear_f1}) is computed also for grid sizes $s_i$ with 
$i=5,\dots, 9$.}
\label{fig:funct_interp}
\end{figure}

Additionally, we report various constants that are estimated from the results of these interpolation processes for $f_1$ and $f_2$ 
(Table~\ref{tab:funct_interp}). Namely, for each interpolation method we estimate the constants $c_f$ and $c_{fg}$ of Theorem \ref{thm:cssim_upper_bound}, 
$C_f$, $C_{fg}$ of Corollary \ref{cor:wcssim_upper_bound}, and the minimal constants $\bar c$ and $\bar C$ so that the corresponding dissimilarity index is 
smaller than this constant times the $L_2$ error. Here $f$ is the target function and $g$ is the interpolant. All these values are found numerically for each 
interpolation set $X_i$, and the average value over $i=1, \dots, 4$ is reported in the table.
Observe that $c_f$ and $C_f$ are independent of the approximation method, and thus they are constant for each $f$. In all cases, $c_f$ is roughly twice as large 
as $c_{fg}$, as is the case for $C_f$ and $C_{fg}$ for $f_2$. In turn, the local constants $C_f$, $C_{fg}$ are larger than the global ones by two orders of 
magnitude for $f_2$, and $4$ to $6$ orders for $f_1$. 
Comparing these values with the optimal constants $\bar c, \bar C$, it seems that our estimates are quite sharp for the global index (Theorem 
\ref{thm:cssim_upper_bound}), since the effectivity ratio $c_f/\bar c$ is roughly of order $10$. 
In the case of the local estimate (Corollary \ref{cor:wcssim_upper_bound}) instead, the effectivity is larger than $10^2$ for $f_2$ and even $10^5$ for $f_1$. 
These results indicate that there is a quite large room for improvement of the constants in our asymptotic estimates, at least in the local case.

Additionally, we report in Table \ref{tab:funct_interp} the values $\bar r$, $\bar R$ of the estimated rates of decay of the two dissimilarity indices, i.e., so 
that $1-\mathrm{SSIM}(f, g)\leq c' h^{\tilde r}$ for some $c'>0$, and similarly for $\wssim$ and $\bar R$. These values are computed numerically by linear 
regression of the logarithms of the computed rates, and they all confirm the theoretical predicted rates of Section \ref{sec:int_methods}. Moreover, in the case 
of the polynomial methods these experimental rates are significantly faster than the expected ones, and this can be considered an instance of superconvergence 
which we argue to be due to the additional regularity of the test functions.

\begin{table}
\centering
\begin{tabular}{|l|l|cccc|cccc|}
\hline  
&&\multicolumn{4}{c|}{$\cssim$} & \multicolumn{4}{c|}{$\wcssim$}\\ 
&&    $c_f$ & $c_{fg}$ & $\bar c$ & $\bar r$ &    $C_f$ & $C_{fg}$ & $\bar C$ & $\bar R$ \\
\hline
$f_1$ &Bil. & 8.0e+01 &  4.2e+01 & 2.8e+01 & 4.1e+00 & 3.1e+07 &  7.4e+06 & 1.4e+03 & 3.4e+00 \\
&Bic.  & 8.0e+01 &  4.0e+01 & 5.6e+00 & 8.3e+00 & 3.1e+07 &  1.6e+07 & 4.1e+02 & 8.1e+00 \\
\cline{2-10}
&$\varphi_1$& 8.0e+01 &  4.0e+01 & 2.1e+00 & 4.5e+00 & 3.1e+07 &  6.9e+06 & 4.4e+01 & 4.9e+00 \\
&$\varphi_2$ & 8.0e+01 &  4.0e+01 & 3.7e+00 & 8.1e+00 & 3.1e+07 &  1.3e+07 & 1.1e+03 & 1.1e+01 \\
\hline
$f_2$&Bil. & 4.0e+00 &  2.0e+00 & 1.5e-01 & 4.0e+00 & 3.4e+02 &  1.7e+02 & 1.3e+00 & 3.8e+00 \\
&Bic.  & 4.0e+00 &  2.0e+00 & 5.1e-01 & 8.1e+00 & 3.4e+02 &  1.7e+02 & 2.0e+00 & 8.0e+00 \\
\cline{2-10}
&$\varphi_1$ & 4.0e+00 &  1.9e+00 & 4.2e-01 & 5.0e+00 & 3.4e+02 &  2.0e+02 & 6.5e+00 & 4.8e+00 \\
&$\varphi_2$ & 4.0e+00 &  2.0e+00 & 4.7e-01 & 9.3e+00 & 3.4e+02 &  1.7e+02 & 6.9e+00 & 8.8e+00 \\
\hline
\end{tabular}
\caption{Constants relating the decay of the SSIM dissimilarity ($c_f$, $c_{fg}$, $\bar c$) and the $\wssim$ dissimilarity ($C_f$, $C_{fg}$, $\bar C$) to the 
$L_2$ error, and computed rates of decay $\bar r, \bar R$, for $f_1$ and $f_2$.}
\label{tab:funct_interp}
\end{table}

\subsection{Image interpolation experiments}\label{sec:img_interp}

We now test bilinear and bicubic image interpolation on actual images, using the implementation of the two methods provided by the OpenCV Python library 
\cite{OpenCV}.
In particular, the derivatives used in the bicubic interpolants are estimated by taking $4\times 4$ neighborhoods in the images (see Section \ref{sec:hermite}).
We consider four $256\times 256$ images displayed in Figure \ref{fig:images} and whose values are normalized in $[0,1]$, which are well-known in the context of 
image processing. Each image is then undersampled to sizes $40\times40$, $80\times 80$, $160\times 160$, $320\times320$. The resized images are then 
interpolated in order to recover a $256\times 256$ image, and the reconstruction is compared to the original image. We point out that the spatial step of the 
interpolation dataset is defined to be the reciprocal of the number of pixels for each dimension, and that we use the same weight function as before to compute 
the $\wssim$.

The decay of the errors are reported in Figure \ref{fig:image_interp}, and also in this case there is an almost perfect agreement between the decay rates of the 
dissimilarity indices and of the squared $L_2$ norm. The corresponding constants, estimated as in the previous section, are reported in Table 
\ref{tab:image_interp}. Also in this case, similar considerations as before can be made regarding the ratios between the different constants, especially the 
fact that the theoretical bounds of Theorem \ref{thm:cssim_upper_bound} is roughly one order away from being optimal.

The rates of convergence of Section \ref{sec:int_methods} can not be applied because of the approximation of the derivatives and the irregularity of the 
functions which underlie the images. 
Nevertheless, the experimental rates $\bar r$ and $\bar R$ reported in Table \ref{tab:image_interp} show that for all images, and for both the local and global 
dissimilarity index, the bilinear interpolants converge with a rate between $1.1$ (\textit{baboon}) and $1.7$ (\textit{peppers}), and the bicubic ones between 
$1.4$ (\textit{baboon}) and $2.2$ (\textit{peppers}), with \textit{cameraman} and \textit{Lenna} in between.
This variability is probably due the presence of more complex structures and more frequent gray-value variations in the some of the images.

\begin{figure}
$
\begin{array}{c}
\includegraphics[width=0.23\textwidth]{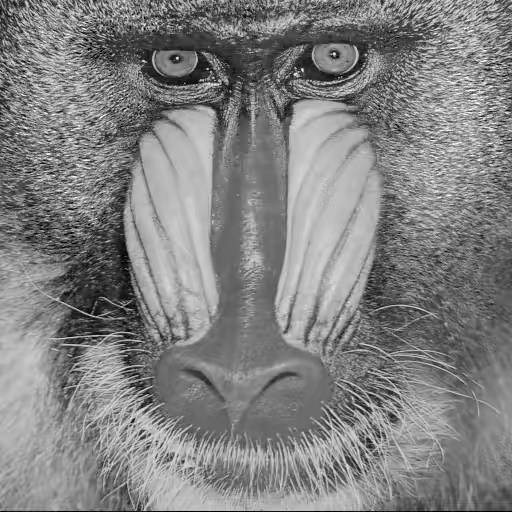} \hskip 5 pt
\includegraphics[width=0.23\textwidth]{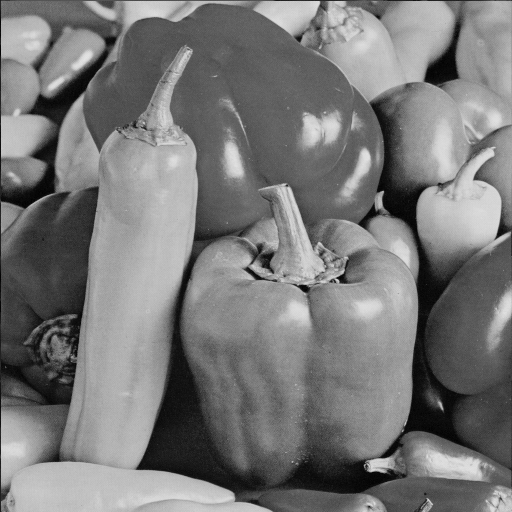}\hskip 5 pt
\includegraphics[width=0.23\textwidth]{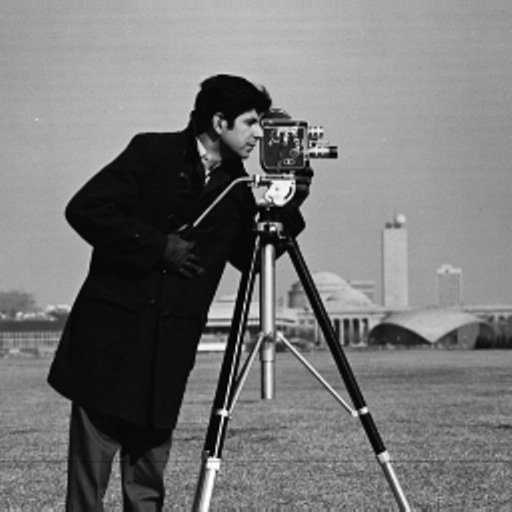} \hskip 5 pt
\includegraphics[width=0.23\textwidth]{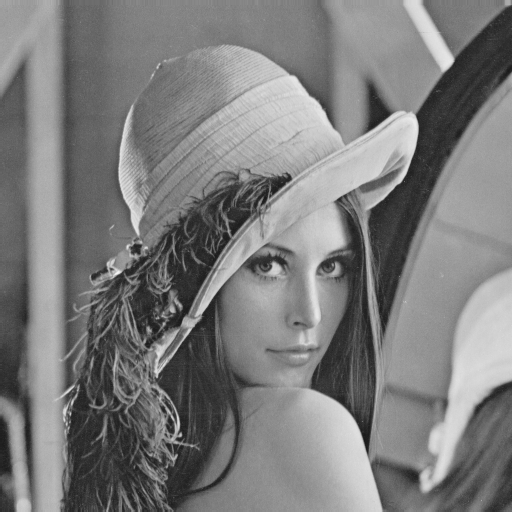}
\end{array}
$
\caption{Test images used in Section \ref{sec:img_interp}: from left to right \textit{baboon},  \textit{peppers}, \textit{cameraman}, \textit{Lenna}.}
\label{fig:images}
\end{figure}

\begin{figure}
\begin{center}
\begin{subfigure}{0.49\textwidth}
\includegraphics[width=\textwidth]{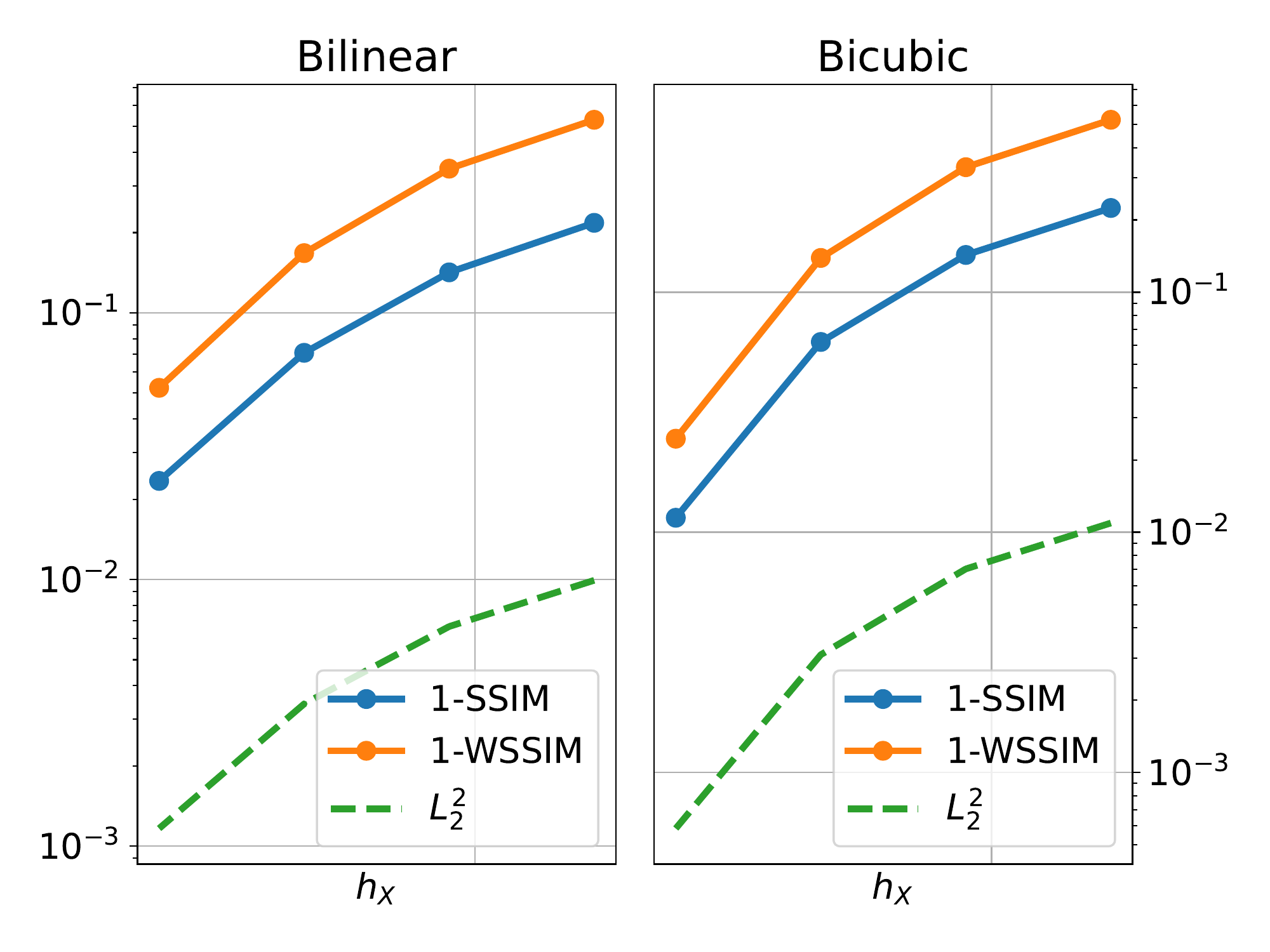}
\caption{}\label{fig:bab}
\end{subfigure}
\begin{subfigure}{0.49\textwidth}
\includegraphics[width=\textwidth]{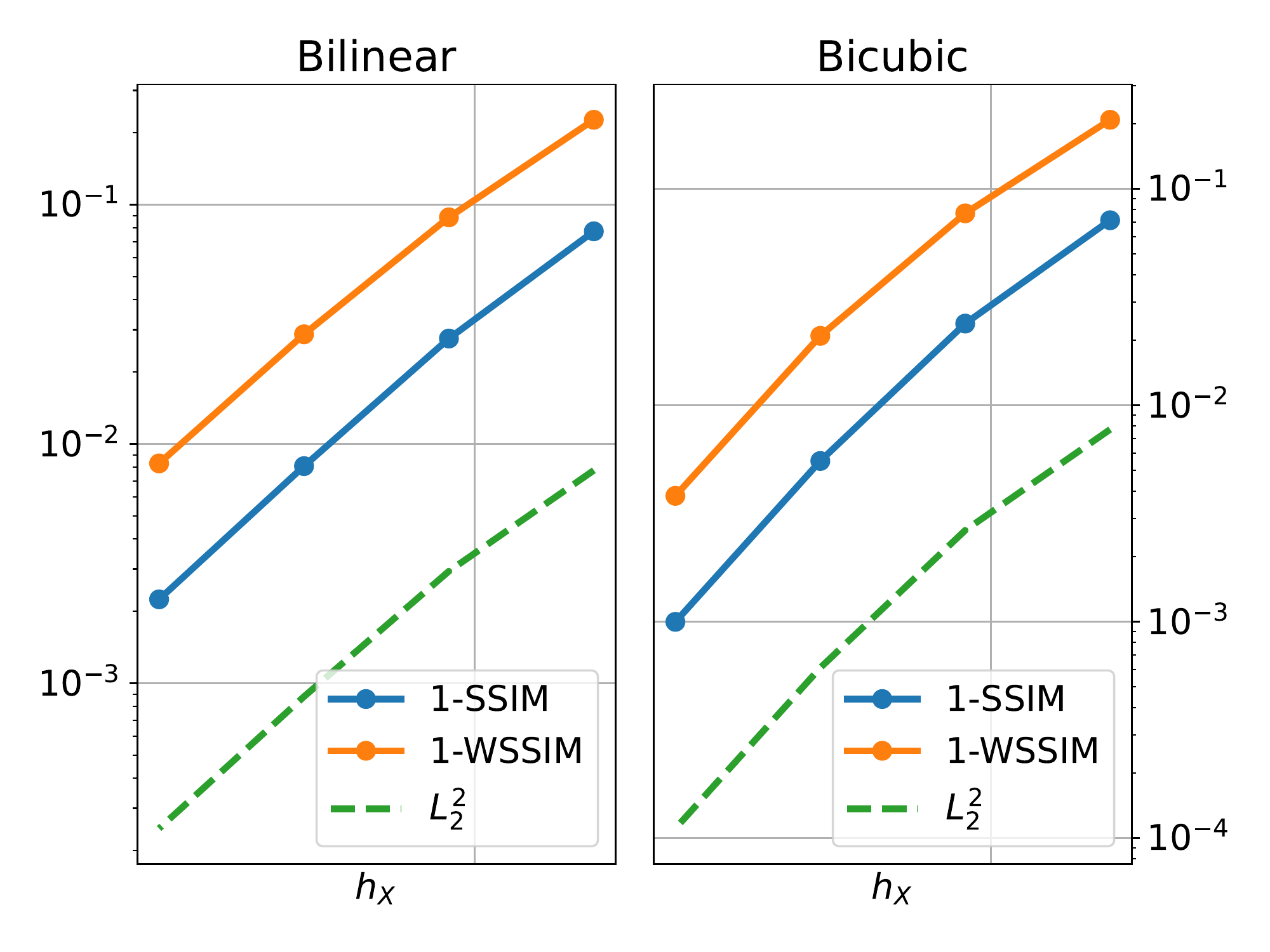}
\caption{}\label{fig:peppers}
\end{subfigure}
\begin{subfigure}{0.49\textwidth}
\includegraphics[width=\textwidth]{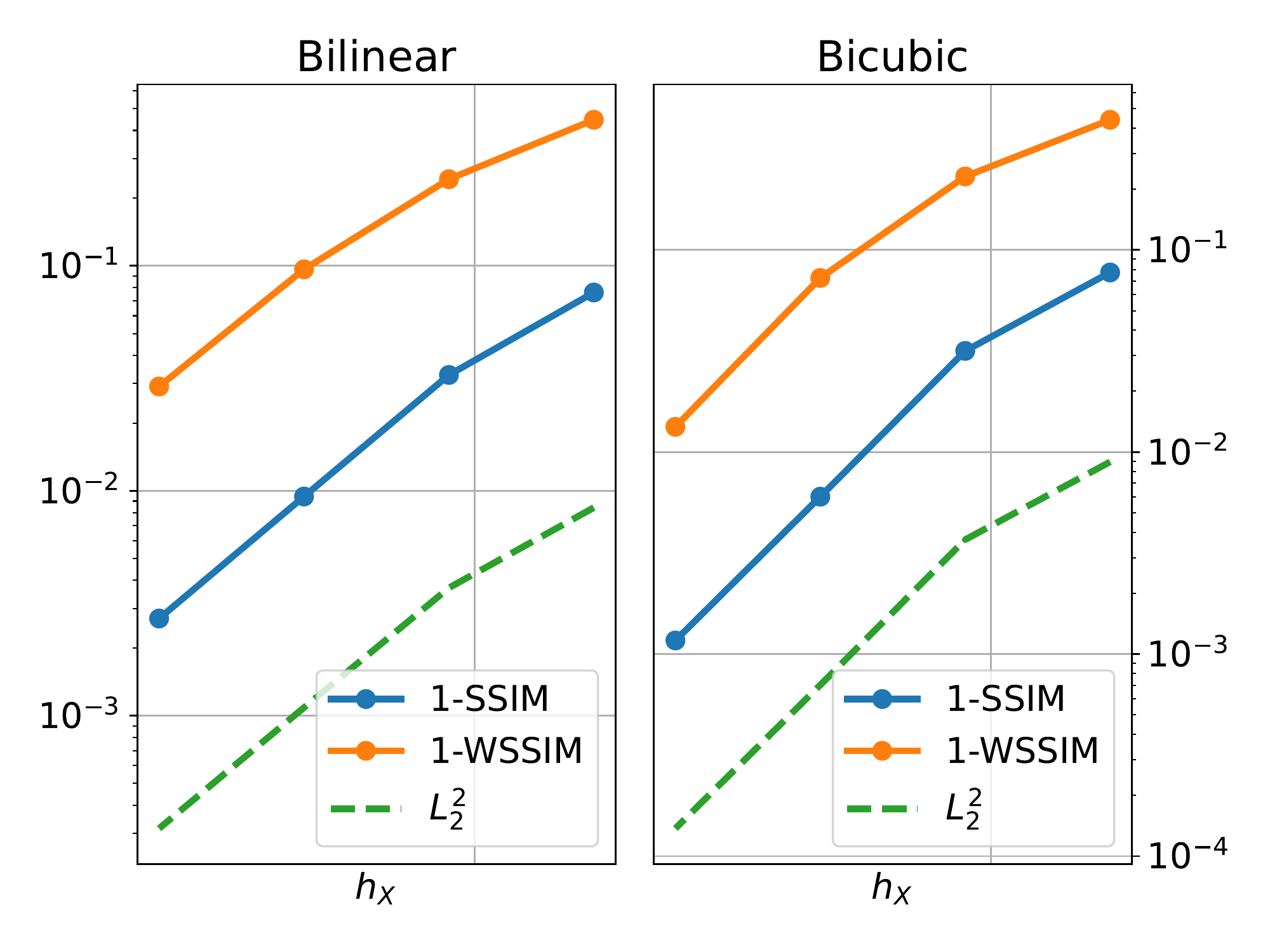}
\caption{}\label{fig:cam}
\end{subfigure}
\begin{subfigure}{0.49\textwidth}
\includegraphics[width=\textwidth]{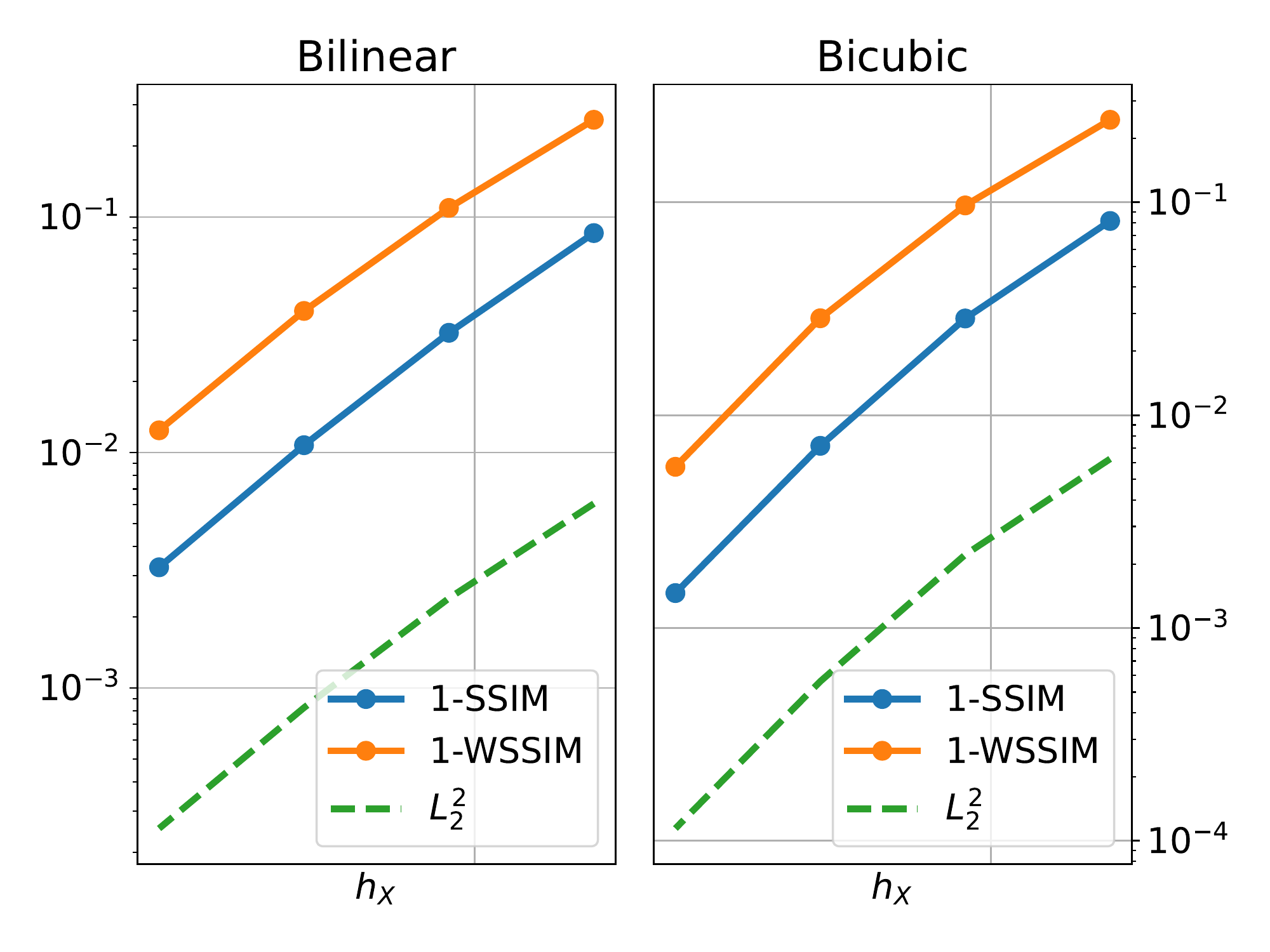}
\caption{}\label{fig:lenna}
\end{subfigure}
\end{center}

\caption{Decay of the dissimilarity indices for bilinear and bicubic interpolation of \textit{baboon} (Panel \eqref{fig:bab}), \textit{peppers} (Panel \eqref{fig:peppers}), \textit{cameraman} (Panel \eqref{fig:cam}), and \textit{Lenna} (Panel \eqref{fig:lenna}).}
\label{fig:image_interp}
\end{figure}

\begin{table}
\centering
\begin{tabular}{|l|l|cccc|cccc|}
\hline  
&&\multicolumn{4}{c|}{$\cssim$} & \multicolumn{4}{c|}{$\wcssim$}\\ 
&&    $c_f$ & $c_{fg}$ & $\bar c$ & $\bar r$ &    $C_f$ & $C_{fg}$ & $\bar C$ & $\bar R$ \\
\hline
$I_1$&Bil. & 1.5e+02 &  8.6e+01 & 2.1e+01 & 1.1e+00 & 8.2e+03 &  5.7e+03 & 5.0e+01 & 1.4e+00 \\
&Bic. & 1.5e+02 &  8.2e+01 & 2.0e+01 & 1.4e+00 & 8.2e+03 &  5.2e+03 & 4.5e+01 & 1.9e+00 \\
\hline
$I_2$&Bil. & 7.5e+01 &  3.9e+01 & 9.4e+00 & 1.7e+00 & 4.9e+04 &  3.0e+04 & 3.1e+01 & 1.7e+00 \\
&Bic. & 7.5e+01 &  3.8e+01 & 9.1e+00 & 2.1e+00 & 4.9e+04 &  2.8e+04 & 3.1e+01 & 2.2e+00 \\
\hline
$I_3$&Bil. & 7.2e+01 &  3.7e+01 & 8.8e+00 & 1.6e+00 & 1.7e+05 &  1.1e+05 & 7.5e+01 & 1.5e+00 \\
&Bic.  & 7.2e+01 &  3.7e+01 & 8.6e+00 & 2.1e+00 & 1.7e+05 &  9.6e+04 & 7.8e+01 & 2.1e+00 \\
\hline
$I_4$&Bil. & 1.0e+02 &  5.5e+01 & 1.3e+01 & 1.6e+00 & 9.6e+04 &  6.7e+04 & 4.6e+01 & 1.6e+00 \\
&Bic. & 1.0e+02 &  5.3e+01 & 1.3e+01 & 1.9e+00 & 9.6e+04 &  6.2e+04 & 4.6e+01 & 2.0e+00 \\
\hline
\end{tabular}
\caption{Constants relating the decay of the SSIM dissimilarity ($c_f$, $c_{fg}$, $\bar c$) and the the $\wssim$ dissimilarity ($C_f$, $C_{fg}$, $\bar C$) to the $L_2$ error, and computed rates of decay $\bar r, \bar R$, for the interpolation of the images
$I_1$ (\textit{baboon}),
$I_2$ (\textit{peppers}),
$I_3$ (\textit{cameraman}), 
$I_4$ (\textit{Lenna}).}
\label{tab:image_interp}
\end{table}

\section{Discussion and conclusions}\label{sec:conclusions}
In this paper we considered the continuous SSIM introduced in \cite{Marchetti2021a} and extended it to a weighted and local version, and we shed some light on their relation with the classical SSIM and with the $L_2$ norm. In particular, we deepened the relationship between cSSIM and $L_2$ norm by providing both upper and lower bounds for the cSSIM (Section \ref{sec:cssim_l2}), and we used those bounds to present a detailed analysis of the cSSIM-convergence rates of some well-known image interpolation methods. To our knowledge, these are the first explicit results on convergence rates of image interpolation methods in terms of the SSIM. 
The numerical tests carried out in Section \ref{sec:num_tests} confirm the theoretical findings of Section \ref{sec:int_methods}, as well as the statement of Theorem \ref{thm:cssim_upper_bound}. 

These theoretical results and experimental findings proved that one may infer significant information on the behavior of the SSIM by looking at the much more 
classical $L_2$ error, which is a convex, thoroughly studied, and easy to implement measure. On the other hand, we discussed how the lower bounds of Section 
\ref{sec:equivalence} may be used to improve the understanding of why the SSIM is found to be often superior to the $L_2$ norm.

Future work will be devoted to the investigation of the insight that complex structures may lead to different rates of approximation. Exploiting such structures in the images may lead to further and more accurate theoretical bounds in terms of the cSSIM. In particular, extensive testing on super resolution \cite{Yang2014} and image interpolation \cite{Yeganeh2015} benchmarks may further quantify the effectivity of the new bounds.
Moreover, the SSIM has been used as a loss function in supervised learning to enforce a perceptually accurate reconstruction of images. In this setting, some approaches have been adopted to overcome the fact that the SSIM (as well as the cSSIM) induces a non convex loss \cite{Brunet12a}. In view of our theoretical results, it seems interesting to investigate if the minimization of the $L_2$ loss may provide a reliable surrogate of the SSIM loss, or at least an accurate initialization.

\section*{Acknowledgments}
The authors acknowledge their membership in the ``Research Italian Network on Approximation (RITA)'', in the GNCS-IN$\delta$AM and in the group ``Teoria dell’Approssimazione e Applicazioni (TAA)" of the Unione Matematica Italiana (UMI).

\bibliographystyle{siamplain}
\bibliography{references}

\begin{thebibliography}{10}

\bibitem{AlNasrawi17}
{\sc M.~Al{-}nasrawi, G.~Deng, and B.~Thai}, {\em Edge-aware smoothing through
  adaptive interpolation}, Signal Image Video Process., 12 (2018),
  pp.~347--354.

\bibitem{Bialecki94}
{\sc B.~Bialecki and X.-C. Cai}, {\em H1-norm error bounds for piecewise
  hermite bicubic orthogonal spline collocation schemes for elliptic boundary
  value problems}, SIAM J. Numer. Anal., 31 (1994), p.~1128–1146.

\bibitem{OpenCV}
{\sc G.~Bradski}, {\em {The OpenCV Library}}, Dr. Dobb's Journal of Software
  Tools,  (2000).

\bibitem{Brunet12}
{\sc D.~Brunet, J.~Vass, E.~R. Vrscay, and Z.~Wang}, {\em Geodesics of the
  structural similarity index}, Appl. Math. Lett., 25 (2012), pp.~1921--1925.

\bibitem{Brunet12a}
{\sc D.~Brunet, E.~R. Vrscay, and Z.~Wang}, {\em On the mathematical properties
  of the structural similarity index}, {IEEE} Trans. Image Process., 21 (2012),
  pp.~1488--1499.

\bibitem{Chen18}
{\sc Y.~{Chen}, Z.~X. {Lyu}, X.~{Kang}, and Z.~J. {Wang}}, {\em A
  rotation-invariant convolutional neural network for image enhancement
  forensics}, in 2018 IEEE International Conference on Acoustics, Speech and
  Signal Processing (ICASSP), 2018, pp.~2111--2115.

\bibitem{DeMarchi17}
{\sc S.~De~Marchi, W.~Erb, and F.~Marchetti}, {\em Spectral filtering for the
  reduction of the {G}ibbs phenomenon for polynomial approximation methods on
  {L}issajous curves with applications in {MPI}}, Dolomites Res. Notes on
  Approx., 10 (2017), pp.~128--137.

\bibitem{DeMarchi20}
{\sc S.~De~Marchi, W.~Erb, F.~Marchetti, E.~Perracchione, and M.~Rossini}, {\em
  Shape-driven interpolation with discontinuous kernels: Error analysis, edge
  extraction, and applications in magnetic particle imaging}, {SIAM} J. Sci.
  Comput., 42 (2020), pp.~B472--B491.

\bibitem{Getreuer11}
{\sc P.~Getreuer}, {\em Linear methods for image interpolation}, Image Process.
  Line, 1 (2011).

\bibitem{Huang17}
{\sc H.~Huang, R.~He, Z.~Sun, and T.~Tan}, {\em Wavelet-{SRN}et: {A}
  wavelet-based {CNN} for multi-scale face super resolution}, in {IEEE}
  International Conference on Computer Vision, {ICCV} 2017, Venice, Italy,
  October 22-29, 2017, {IEEE} Computer Society, 2017, pp.~1698--1706.

\bibitem{Hwang04}
{\sc {J. W. Hwang} and {H. S. Lee}}, {\em Adaptive image interpolation based on
  local gradient features}, IEEE Signal Processing Letters, 11 (2004),
  pp.~359--362.

\bibitem{Jiang15}
{\sc N.~Jiang and L.~Wang}, {\em Quantum image scaling using nearest neighbor
  interpolation}, Quantum Inf. Process., 14 (2015), pp.~1559--1571.

\bibitem{Khan19}
{\sc S.~{Khan}, D.~{Lee}, M.~A. {Khan}, A.~R. {Gilal}, and G.~{Mujtaba}}, {\em
  Efficient edge-based image interpolation method using neighboring slope
  information}, IEEE Access, 7 (2019), pp.~133539--133548.

\bibitem{Marchetti2021a}
{\sc F.~Marchetti}, {\em Convergence rate in terms of the continuous {SSIM}
  (c{SSIM}) index in {RBF} interpolation}, Dolomites Res. Notes on Approx., 14
  (2021).

\bibitem{McLean2000}
{\sc W.~McLean}, {\em Strongly elliptic systems and boundary integral
  equations}, Cambridge university press, 2000.

\bibitem{Rehman15}
{\sc A.~Rehman, K.~Zeng, and Z.~Wang}, {\em Display device-adapted video
  quality-of-experience assessment}, in Human Vision and Electronic Imaging XX,
  San Francisco, California, USA, February 9-12, 2015, B.~E. Rogowitz, T.~N.
  Pappas, and H.~de~Ridder, eds., vol.~9394 of {SPIE} Proceedings, {SPIE},
  2015, p.~939406.

\bibitem{Rukundo14}
{\sc O.~{Rukundo} and B.~T. {Maharaj}}, {\em Optimization of image
  interpolation based on nearest neighbour algorithm}, in 2014 International
  Conference on Computer Vision Theory and Applications (VISAPP), vol.~1, 2014,
  pp.~641--647.

\bibitem{Sampat09}
{\sc M.~P. Sampat, Z.~Wang, S.~Gupta, A.~C. Bovik, and M.~K. Markey}, {\em
  Complex wavelet structural similarity: {A} new image similarity index},
  {IEEE} Trans. Image Process., 18 (2009), pp.~2385--2401.

\bibitem{Siu12}
{\sc W.~{Siu} and K.~{Hung}}, {\em Review of image interpolation and
  super-resolution}, in Proceedings of The 2012 Asia Pacific Signal and
  Information Processing Association Annual Summit and Conference, 2012,
  pp.~1--10.

\bibitem{Smith81}
{\sc P.~Smith}, {\em Bilinear interpolation of digital images},
  Ultramicroscopy, 6 (1981), pp.~201--204.

\bibitem{Venkataramanan2021}
{\sc A.~K. Venkataramanan, C.~Wu, A.~C. Bovik, I.~Katsavounidis, and
  Z.~Shahid}, {\em A hitchhiker’s guide to structural similarity}, IEEE
  Access, 9 (2021), p.~28872–28896.

\bibitem{Wang2009}
{\sc Z.~Wang and A.~C. Bovik}, {\em Mean squared error: Love it or leave it? a
  new look at signal fidelity measures}, IEEE Signal Processing Magazine, 26
  (2009), pp.~98--117.

\bibitem{Wang04}
{\sc Z.~Wang, A.~C. Bovik, H.~R. Sheikh, and E.~P. Simoncelli}, {\em Image
  quality assessment: from error visibility to structural similarity}, {IEEE}
  Trans. Image Process., 13 (2004), pp.~600--612.

\bibitem{Wendland2005}
{\sc H.~Wendland}, {\em Scattered {D}ata {A}pproximation}, vol.~17 of Cambridge
  Monographs on Applied and Computational Mathematics, Cambridge University
  Press, Cambridge, 2005.

\bibitem{WendlandRieger2005}
{\sc H.~Wendland and C.~Rieger}, {\em Approximate interpolation with
  applications to selecting smoothing parameters}, Numerische Mathematik, 101
  (2005), pp.~729--748.

\bibitem{Yamanaka17}
{\sc J.~Yamanaka, S.~Kuwashima, and T.~Kurita}, {\em Fast and accurate image
  super resolution by deep {CNN} with skip connection and network in network},
  in Neural Information Processing - 24th International Conference, {ICONIP}
  2017, Guangzhou, China, November 14-18, 2017, Proceedings, Part {II}, D.~Liu,
  S.~Xie, Y.~Li, D.~Zhao, and E.~M. El{-}Alfy, eds., vol.~10635 of Lecture
  Notes in Computer Science, Springer, 2017, pp.~217--225.

\bibitem{Yang2014}
{\sc C.-Y. Yang, C.~Ma, and M.-H. Yang}, {\em Single-image super-resolution: A
  benchmark}, in Computer Vision -- ECCV 2014, D.~Fleet, T.~Pajdla, B.~Schiele,
  and T.~Tuytelaars, eds., Cham, 2014, Springer International Publishing,
  pp.~372--386.

\bibitem{Yeganeh2015}
{\sc H.~Yeganeh, M.~Rostami, and Z.~Wang}, {\em Objective quality assessment of
  interpolated natural images}, IEEE Transactions on Image Processing, 24
  (2015), pp.~4651--4663.

\bibitem{Zeng19}
{\sc K.~Zeng, S.~Ding, and W.~Jia}, {\em Single image super-resolution using a
  polymorphic parallel {CNN}}, Appl. Intell., 49 (2019), pp.~292--300.

\end{thebibliography}

\end{document}